\let\realverbatim=\verbatim
\let\realendverbatim=\endverbatim
\renewcommand\verbatim{\par\addvspace{6pt plus 2pt minus 1pt}\realverbatim}
\renewcommand\endverbatim{\realendverbatim\addvspace{6pt plus 2pt minus 1pt}}
         \let\leq=\leqslant
         \let\geq=\geqslant
\newsavebox{\astrutbox}
\sbox{\astrutbox}{\rule[-5pt]{0pt}{20pt}}
\newcommand\eg{e.g.\ }
\renewcommand{\phi}{\varphi}
\newtheorem{theorem}{Theorem}[section]
\newdefinition{definition}[theorem]{Definition}
\newtheorem{lemma}[theorem]{Lemma}
\newtheorem{corollary}[theorem]{Corollary}
\newtheorem{remark}[theorem]{Remark}
\newtheorem{proposition}[theorem]{Proposition}
\renewcommand{\L}{{\mathcal L}}
\renewcommand{\O}{{\mathcal O}}
\newcommand{\R}{\mathbb{R}}
\newcommand{\Z}{\mathbb{Z}}
\newcommand{\bv}{{\mathbf v}}
\renewcommand{\r}{{\mathbf r}}
\newcommand{\one}{\mathds{1}}
\newcommand{\ie}{{\it i.e.}}
\newcommand{\M}{{\mathcal M}}
\newcommand{\T}{{\mathbb T}}
\newcommand{\eps}{\varepsilon}
\renewcommand{\div}{\hspace{.1em}\textnormal{div}\hspace{.1em}}
\title[A Continnum Limit for the PageRank Algorithm]{A Continuum Limit for the PageRank Algorithm}
\author[A. Yuan et al.]{%
  A.\ns Y\ls U\ls A\ls N$\,^1$,\ns
  J.\ns C\ls A\ls L\ls D\ls E\ls R$\,^2$\ns
\and
  B.\ns O\ls S\ls T\ls I\ls N\ls G$\,^3$
}
\affiliation{%
  $^1\,$Department of Mathematics, University of Minnesota\\
    email\textup{\nocorr: \texttt{yuanx290@umn.edu}}\\
  $^2\,$Department of Mathematics, University of Minnesota\\
    email\textup{\nocorr: \texttt{jwcalder@umn.edu}}\\
  $^3\,$Department of Mathematics, University of Utah\\
	  email\textup{\nocorr: \texttt{osting@math.utah.edu}}}
\date{\today}
\begin{document}

\label{firstpage}
\maketitle

\begin{abstract}
Semi-supervised and unsupervised machine learning methods often rely on graphs to model data, prompting research on how theoretical properties of operators on graphs are leveraged in learning problems. While most of the existing literature focuses on undirected graphs, directed graphs are very important in practice, giving models for physical, biological, or transportation networks, among many other applications. In this paper, we propose a new framework for rigorously studying continuum limits of learning algorithms on directed graphs. We use the new framework to study the PageRank algorithm, and show how it can be interpreted as a numerical scheme on a directed graph involving a type of normalized \textit{graph Laplacian}. We show that the corresponding continuum limit problem, which is taken as the number of webpages grows to infinity, is a second-order, possibly degenerate, elliptic equation that contains reaction, diffusion, and advection terms. We prove that the numerical scheme is consistent and stable and compute explicit rates of convergence of the discrete solution to the solution of the continuum limit PDE. We give applications to proving stability and asymptotic regularity of the PageRank vector. Finally, we illustrate our results with numerical experiments and explore an application to data depth. 
\end{abstract}

\begin{keywords}
Partial differential equations on graphs and networks; Second-order elliptic equations; Viscosity solutions.
\end{keywords}

\begin{subjclass}[2010]
35J15 (Primary); 35D40 (Secondary)
\end{subjclass}
\section{Introduction} 

Due to its versatility in modeling data, graphs are frequently leveraged for applications in  machine learning and data science. A graph structure encodes interdependencies among constituents, such as social media users, images or videos  in a collection, or physical or biological agents, and provides a convenient representation for high dimensional data. For example, in a graph representing research collaborations, we can represent each author as a node in the graph, and co-authorship is represented by edges between nodes, with edge weights depending on the frequency of co-authorship. The resulting graph is \textit{undirected} as the edges are bi-directional. On the other hand, transportation and biological networks often result in \textit{directed} graphs because the relationship between two nodes is ordered, such as the direction of a train route or a predator-prey relationship. 

Graph-based methods are particularly prominent in unsupervised and semi-supervised machine learning tasks that seek to reveal structures and patterns in unlabeled data. For example, in semi-supervised classification, one has labels for a subset of the nodes in the graph, and the problem is to propagate the labels to the rest of the graph in a meaningful way. A widely used and very successful algorithm for semi-supervised classification is Laplacian semi-supervised learning, originally proposed in \cite{zhu2003semi}, which finds the unique \emph{graph harmonic} function that extends the labels. There are many extensions and modifications of Laplacian regularization (see, \eg, \cite{zhou2005learning,zhou2004learning,zhou2004ranking,ando2007learning,Szummer,BelkinMfld}), with more recent methods drawing inspiration from partial differential equations (PDEs) \cite{Bertozzi,garcia2014multiclass}. For classification problems at very low labeling rates, $p$-Laplacian regularization has recently been introduced \cite{ElAlaoui,flores2018}. In unsupervised learning, graph-based algorithms are used in spectral clustering \cite{ng2002spectral,shi2000normalized}, Laplacian eigenmaps \cite{belkin2002laplacian}, diffusion maps \cite{coifman2006diffusion}, manifold ranking~\cite{he2004manifold,he2006generalized,wang2013multi,yang2013saliency,zhou2011iterated,xu2011efficient}, minimal surface graph partitioning \cite{Zosso}, and PageRank \cite{Gleich}.

Various types of graph Laplacians appear in nearly all graph-based learning algorithms, due to the ability of the graph Laplacian to uncover geometric structure in datasets. Graph Laplacians that are commonly used in practice include the \emph{unnormalized} Laplacian
\[\L u(x) = \sum_{y\in X}\omega_{xy}(u(y)-u(x))\]
the \emph{random walk} Laplacian
\[\L^{rw}u(x) =\frac{1}{d_x}\sum_{y\in X}\omega_{xy}(u(y)-u(x)),\]
and the \emph{normalized} Laplacian
\[\L^{n}u(x) =\sum_{y\in X}\frac{\omega_{xy}}{\sqrt{d_xd_y}}u(y)-u(x),\]
where $X$ denotes the set of nodes in the graph, $u: X \to \R$, $\omega_{xy}$ is the (undirected) edge weight between $x$ and $y$, and $d_x = \sum_{y\in X}\omega_{xy}$ is the degree of node $x$. The unnormalized graph Laplacian appears naturally as the gradient of the Dirichlet energy
\[E(u) = \sum_{x,y\in X}\omega_{xy}(u(x)-u(y))^2.\]
The random walk Laplacian is exactly the generator for a random walk on $X$ with probability $d_x^{-1}\omega_{xy}$ of stepping from $x$ to $y$, and the normalized graph Laplacian is a convenient way to obtain a \emph{symmetric} normalization of the graph Laplacian. While these normalizations are most frequently used in practice, many other choices are possible. For example, see \cite{hoffmann2019spectral} for an analysis of how the choice of normalization affects spectral clustering.   We note that the random walk interpretation allows us to view methods like those studied in \cite{zhu2003semi} as performing classification by randomly walking on the graph until hitting a labeled node. Intuitively, the random walk will naturally \emph{learn} the structure of the unlabeled data by remaining within clusters of high density for long enough to hit a labeled point, before moving to a different cluster. While many classification algorithms seek graph harmonic functions, the spectrum of graph Laplacians is widely used to construct low dimensional embeddings of graphs. 

The algorithms discussed above are mainly designed for symmetric graphs, where $\omega_{xy}=\omega_{yx}$. Perhaps one of the most widely known algorithms for directed graphs is the PageRank algorithm, which is used to evaluate the importance of nodes in a graph based on their link structure. While the algorithm is most famous for sorting Google search results up until the mid-$2000$s, variants of PageRank are used by other tech companies (for example, Twitter uses a reversed PageRank to identify influential, topic-specific accounts), and have been adapted to solve problems in neuroscience, genetics, and recommender systems \cite{Gleich}.  The PageRank algorithm uses a random surfer model with teleportation probability $\alpha\in [0,1]$ to rank pages. To describe the model, when the random surfer is at webpage $x$, she will with probability $\alpha$ teleport to a random webpage, and with probability $1-\alpha$ click on an outgoing link to another webpage. When the surfer clicks on an outgoing link, the link is selected at random and we denote by $p_{xy}$ the probability of clicking a link to website $y$ from website $x$. When she randomly teleports, the next website is chosen at random from a teleportation probability distribution $\bv$. The inclusion of the teleportation step ensures the random surfer does not get stuck in disconnected components of the graph. 

The PageRank vector is the invariant distribution of the resulting Markov chain, which measures the amount of time the random surfer spends on each webpage. Webpages that are visited more often by the surfer are ranked more highly, while websites that are rarely visited are ranked lower.  Mathematically, the PageRank vector $\r$ is the (normalized) solution of the eigenvector problem
\begin{align}\label{PageRankEigensystem}
 ((1-\alpha)P + \alpha \bv \one^T)\r = \r, 
\end{align}
where $P = (p_{yx})_{x,y\in X}$ is the probability transition matrix described above, $\bv$ is the teleportation probability distribution, and $\one$ is the column vector of all $1$'s. We note that by the Perron-Frobenius Theorem \cite{Gleich}, the PageRank vector $\r$ can be chosen to have real-valued strictly positive entries. If we choose the normalization $\one^T \r = 1$, so that $\r$ is a probability distribution, then the eigenvector problem \eqref{PageRankEigensystem} is equivalent to the linear system 
\begin{align}\label{PRLinear}
\left(I - (1-\alpha)P \right) \r = \alpha \bv. 
\end{align} 
This formulation is more convenient, since the left hand side can be interpreted as a type of graph Laplacian. 

The teleportation probability distribution $\bv$ can be uniform over all webpages, or can be nonuniform. Indeed, by setting $\bv(x)=\delta_{x_0}(x)$ for a specific website $x_0$ leads to a localized PageRank algorithm that ranks sites nearby $x_0$ \cite{Gleich}. Computationally, the PageRank vector is obtained via the power method on \eqref{PageRankEigensystem}, which converges at a rate of $\left|\lambda_2 / \lambda_1 \right|$, \ie,~a ratio of the second eigenvalue to the leading eigenvalue of the matrix. In the case of PageRank, Haveliwala and Kamvar \cite{Haveliwala} show that $\lambda_1=1$ and $\lambda_2=1-\alpha$, so the convergence rate depends heavily on the choice of the teleportation parameter. Google takes $1-\alpha$ to be $.85$ \cite{Langville}. There are also adaptations of semi-supervised learning to directed graphs (see \cite{zhou2005semi}). 

Due to the ubiquity of graph Laplacians in graph-based learning problems, much work has been devoted to understanding how the graph Laplacian is able to uncover geometric and distributional structure from unlabeled data. To do this, one usually assumes the graph is a random geometric graph with $n$ points and length scale $h>0$, and considers the limit as $n\to \infty$ and $h\to 0$. This means the nodes in the graph are an \emph{i.i.d.}~sample of size $n$ from a density $\rho$ supported on a $d$-dimensional manifold $\M$ embedded in $\R^D$, and the weights $\omega_{xy}$ are defined by
\[\omega_{xy} = \Phi\left( \frac{|x-y|}{h} \right),\]
where $\Phi:[0,\infty)\to [0,\infty)$ is nonincreasing and usually compactly supported. The first results to appear in the literature were pointwise consistency results, showing that a graph Laplacian $\L$ applied to a smooth test function $\phi\in C^3(\M)$ converges, as $n\to \infty$ and $h\to 0$ to a weighted version of the Laplace-Beltrami operator
\[\Delta_\rho \phi = \rho^a \div(\rho^b \nabla (\rho^c \phi))\]
for various values of $a,b,c$ that depend on the choice of normalization of the graph Laplacian. For example, for the unnormalized graph Laplacian, $a=-1, b=2, c=0$, and for the random walk Laplacian $a=-2,b=2,c=0$. If $h\to 0$ and $n\to \infty$ simultaneously, then the condition $nh^{d+2}\gg \log n$ is required for pointwise consistency, which ensures there are enough neighbors of each data point to apply appropriate concentration of measure results. To obtain $O(h)$ pointwise consistency rates, it is required that $nh^{d+4}\gg 1$. We contrast this with the condition $nh^d\gg \log n$ required for graph connectivity. For pointwise consistency results of this flavor, see \cite{Bousquet,LafonThesis,Hein,HeinMore,BelkinUniform,Singer}. Pointwise consistency was extended to $k$-nearest neighbor graph constructions in \cite{Ting}, which includes some mildly directed graphs due to antisymmetries in the $k$-nearest neighbor relation.

While pointwise consistency results are informative, they do not prove that the solutions of graph-based problems converge to solutions of their counterparts as $n\to \infty$ and $h\to 0$. This question is more subtle and requires further analysis. The problem of spectral convergence of the graph Laplacian spectrum to that of the Laplace-Beltrami operator has been well-studied. Belkin and Niyogi \cite{BelkinEigen} established $L^2$ spectral convergence (convergence of eigenvalues and $L^2$ convergence of eigenvectors) when $\rho$ is the uniform distribution and this was extended to non-uniform distributions with partial convergence rates in  \cite{von2008consistency}. Shi \cite{Shi} proved convergence rates and extended the analysis to include manifolds with boundary. The $L^2$ convergence rate was improved recently in \cite{GarciaEigen} using variational methods, and then further improved in \cite{calder2019improved} to agree with the pointwise consistency rate $O(h)$, which is the sharpest known $L^2$ spectral convergence rate. The variational parts of the spectral convergence arguments in \cite{calder2019improved,GarciaEigen} were heavily influenced by earlier work in a non-probabilistic setting \cite{Burago}. We also mention that very recent work has established the first $L^\infty$ eigenvector convergence rates \cite{dunson2019diffusion}.

For problems in clustering and semi-supervised learning, recent work has begun to address convergence in the continuum using tools from the calculus of variations and viscosity solutions of PDEs. Trillos and Slep\v cev \cite{SlepcevTV} developed a Gamma-convergence framework for proving discrete to continuum convergence of graph-based problems, and the framework has been applied to prove discrete to continuum consistency in many problems (see, \eg \cite{SlepcevTV,trillos2016consistency,trillos2018variational,trillos2019knn,SlepcevLp,Osting} and the references therein). Discrete to continuum convergence can also be established with the maximum principle and the viscosity solution framework, as was established in \cite{calder2018game,calder2019consistency} for the game-theoretic graph $p$-Laplacian and Lipschitz learning. The maximum principle can also be used to prove asymptotic H\"older regularity of solutions to graph-based learning problems, as was done in  \cite{calder2019properly,calder2018game}. For the linear $2$-graph Laplacian, \cite{RyanNicolas2019} used the maximum principle to establish discrete to continuum convergence rates for regression problems, and \cite{Calder2020rates} used the maximum principle in combination with random walk arguments to establish convergence rates for semi-supervised learning at low labeling rates. We also mention that \cite{shi2018error} uses the maximum principle to prove convergence rates for a reweighted version of the graph Laplacian in low label rate semi-supervised learning context. 

Despite the flurry of recent work on discrete to continuum consistency results, almost none of the results apply to problems on \emph{directed graphs}, which are important and widely used in practice. The only results we are aware of for directed graphs are for $k$-nearest neighbor graphs \cite{Ting,trillos2019knn}, which are directed only due to the asymmetry of the $k$-nearest neighbor relation.  Discrete to continuum results are important for providing insights and further understanding of algorithms. Furthermore, continuum limits allow us to prove stability of graph-based algorithms, showing that they are insensitive to the particular realization of the data, and often can lead to new formulations of learning problems founded on stronger theoretical principles. This paper aims to start filling this void by studying consistency results for problems on \emph{directed graphs}. We propose the \emph{random directed geometric graph} model, which extends the random geometric graph in a natural way by adding directionality in the weights. For concreteness, we study the PageRank problem, and prove that the PageRank vector converges in the large sample size limit to the solution of a continuum, possibly degenerate, elliptic PDE. Depending on the strength of the directionality in the weights, the continuum PDE can be a first order equation, which is a new type of result for consistency of graph Laplacians. Our main results are finite sample size error estimates with high probability, which imply convergence in the continuum, but are stronger in that they hold in the non-asymptotic regime. We use these results to prove stability of the PageRank problem, and we also study the time-dependent version of the problem, which examines the continuum limit of the distribution of the random surfer. Our proofs use pointwise consistency and the maximum principle, with appropriate adaptations to directed graphs. We also present the results of some numerical experiments confirming our theoretical results, and exploring applications to data depth.

\section{Setup and main results}
\label{sec:setup}

We now describe our setup and main results. Section~\ref{sec:RDGG} introduces our \emph{random directed geometric graph} model, and Section~\ref{sec:main} formulates the PageRank problem in a new way and gives our main results.

\subsection{Random directed geometric graph}
\label{sec:RDGG} 

In order to study continuum limits for problems on directed graphs, we propose a new model for a random directed graph that we call a \emph{random directed geometric graph}. Let $x_1, x_2, \ldots, x_n$ be an \emph{i.i.d.}~sample of size $n$ on the torus $\mathbb{T}^d=\R^d/\Z^d$ with a density function $\rho:\T^d \to [0,\infty)$. We define the weight $\omega_{xy}$ from $x$ to $y$ by
\begin{equation*}
\omega_{xy} = \Phi\left(\frac{\left|B(x)(y-x-\varepsilon b(x))\right|}{h}\right), 
\end{equation*}
where $B:\T^d\to \R^{d\times d}$ and $b:\T^d\to \R^d$, and $B(x)$ has full rank for every $x\in \T^d$. The parameter $h>0$ is the bandwidth of the kernel, and $\eps>0$ is the strength of the directionality. The kernel function  $\Phi$ is assumed to be nonnegative with compact support. When $B=I$ and $b=0$ or $\eps=0$, the weights are the same as those of a random geometric graph, which is symmetric. For other choices of $B$ and $b$, the corresponding graph is directed, with directional influence along the vector field $b$. The matrix $B$ can be viewed as changing the metric locally. 

Assume for the moment that $\Phi$ has compact support in $[0,2]$. We observe that for fixed $x$, the support of the weight $y\mapsto \omega_{yx}$ is the ellipse shaped region 
\begin{align}
E_x := \left\{ y \in \mathbb{R}^d \hspace{.1cm} \Big| \hspace{.1cm} \frac{|B(x)(y-x-\varepsilon b(x))|}{h} < 2 \right\}\label{domain} 
\end{align} 
which depends on $x$. Figure~\ref{weightsDomain} gives us a sense of $E_x$ in two dimensions.  In the random walk (or random surfer) interpretation, the random walker moves from $x$ to a point in the set $E_x$, which contains a drift term $\eps b(x)$ and an anisotropic diffusion governed by $B$.

\begin{figure}
\centering
\begin{tikzpicture}[scale=1.25]
\draw[thick,blue] (0,0) -- (4,2);
\filldraw (0,0) circle (2pt);
\node[anchor = north west] at (0,0) {$x$};
\filldraw (4,2) circle (2pt);
\node[anchor = north west] at (4,2) {$x+\eps b(x)$};
\draw[rotate around={-45:(4,2)},thick] (4,2) ellipse (2 and 1);
\draw[rotate around={45:(4,2)},-Latex,thick,red] (4,2) -- (5,2);
\draw[rotate around={135:(4,2)},-Latex,thick,red] (4,2) -- (6,2);
\node[anchor = south west] at (4.25,2) {$r_1 h$};
\node[anchor = south east] at (4,2.5) {$r_2 h$};
\end{tikzpicture}

\caption{A visualization of the set $E_x$, which is the support of the weight $y\mapsto\omega_{xy}$ in $\mathbb{R}^2$. The blue line represents the directional preference $b(y)$ multiplied by $\varepsilon$. The ellipse is the set $E_x$, where $r_i = 2/\sqrt{\lambda_i}$ and $\lambda_1, \lambda_2$ are the eigenvalues of $B(x)^T B(x)$, and the red arrows indicate the eigenvectors of $B(x)^TB(x)$.}
\label{weightsDomain}
\end{figure}
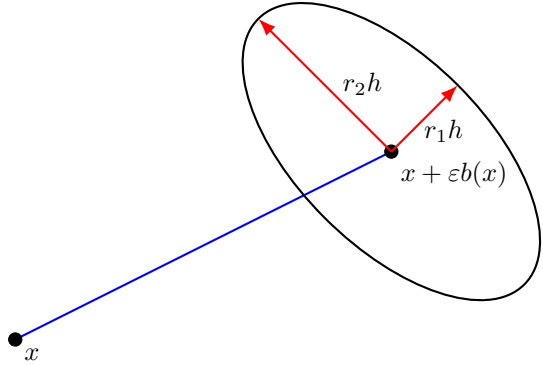

In the following remarks, we provide an in-depth motivation for the weights in the context of ranking players in a sports tournament and modeling systematic distortion in the data \cite{Shnitzer}.

\begin{remark}[Motivation via ranking]
Assume each team or player is represented by a feature vector $x\in \R^d$ that adequately describes each player. When player $x$ and player $y$ play against each other, we write $x\succ y$ if $x$ wins the game, and $y \succ x$ if $y$ wins. We assume each time $x$ and $y$ play, $x$ wins with probability $\mathbb{P}(x\succ y)$ and $y$ wins with probability $\mathbb{P}(y\succ x) = 1-\mathbb{P}(x \succ y).$

Suppose that $x$ and $y$ play $n$ games, and we assign an edge in our graph from $x$ to $y$ if $y$ wins more than half of the games, and assign the edge from $y$ to $x$ if $x$ wins more than half the games. That is, the edge is directed toward the ``better'' player. The weight on the edge is the excess number of wins for the winning player. In expectation, if $\mathbb{P}(x\succ y)\geq \tfrac{1}{2}$, then we have an edge from $y$ to $x$ with weight $$\omega_{yx}= -1 + 2\mathbb{P}(x\succ y).$$
If $\mathbb{P}(y \succ x) \geq \tfrac{1}{2}$ then we have an edge from $x$ to $y$ with weight $$\omega_{xy}= -1 + 2\mathbb{P}(y\succ x) = 1 - 2\mathbb{P}(x\succ y).$$
Now, we make a modeling assumption on $\mathbb{P}(x\succ y)$. We assume there is some underlying (unknown) ranking function
\[\phi:\R^d \to [0,1],\]
so that $\phi(x)\geq r(y)$ indicates player $x$ is better than $y$. A natural model for the probability $\mathbb{P}(x\succ y)$ is then $$\mathbb{P}(x\succ y) =\frac{1}{2} + \frac{\phi(x) - \phi(y)}{2},$$
leading to the weight
\[\omega_{yx}= \phi(x) - \phi(y),\]
when $\phi(x)\geq \phi(y)$. Using a Taylor expansion, we have an edge from $y$ to $x$ if 
\begin{align*}\phi(y) \leq \phi(x) \approx \phi(y) + \nabla \phi(y)\cdot (x-y) +\frac{1}{2}(x-y)^T \nabla^2 \phi (x-y),
\end{align*}
or
\begin{align} \nabla \phi(y)\cdot (x-y) +\frac{1}{2}(x-y)^T \nabla^2 \phi (x-y) \geq 0. \label{ellipse} \end{align} 

If $\phi$ is convex, the set of $x$ satisfying the inequality above lie in an ellipse. This gives some motivation for the directional preference $b=\nabla \phi$ and for the elliptical shape governed by $B=(\nabla^2 \phi)^{1/2}$ as they occur in the definition of our weights. We restrict the weights locally to some ball $B(y,2h)$ based on the assumption that teams play against similarly ranked teams in a tournament. 
\end{remark}

\begin{remark}
The work in \cite{Shnitzer} constructs two operators that can identify the common structures and the differences, respectively, between two diffeomorphic Riemannian manifolds. An application to identifying signals from fetal electrocardiogram (EGC) data via observed maternal ECG data is considered. Their algorithm handles cases where there is a systematic diffusion in the observed vs.~target data; our problem is ``adjacent'' in the sense that we model the diffusion and directional preferences via $B(y)$ and $b(y)$ in our setup. 
\end{remark}

\subsection{Main results}
\label{sec:main}

We now present our setup and main results. We take the random directed geometric graph model from Section~\ref{sec:RDGG} with $B(x)\equiv I$ (though see Section~\ref{GeneralizedB} for a discussion of how the results change when $B$ is not the identity). That is, let $x_1,x_2,\dots,x_n$ be an \emph{i.i.d.}~sample of size $n$ on the torus $\T^d$ with probability density $\rho:\T^d\to [\rho_{min},\infty)$, where $\rho_{min}>0$, and set $X_n = \{x_1,x_2,\dots,x_n\}$. We define the weight from $x$ to $y$ by
\begin{equation}\label{eq:weights}
\omega_n(x,y) = \Phi\left(\frac{\left|y-x-\varepsilon b(x)\right|}{h}\right), 
\end{equation}
and the degree of $x$ by $d_n(x) = \sum_{y\in X_n}\omega_n(x,y)$. We assume the kernel $\Phi$ is smooth, compactly supported on $[0,2]$, nonnegative, nonincreasing,  satisfies $\Phi(0) > 0$ and
\begin{equation}\label{eq:massone}
\int_{B(0,2)}\Phi(|z|) \, dz =1.
\end{equation}

As constructed, for example in \cite{Szummer}, the probability of a random walk on the graph transitioning from $x$ to $y$ is $p_{xy}= d_n(x)^{-1}\omega_n(x,y)$. Plugging this into \eqref{PRLinear} and denoting the PageRank vector by $r_n:X_n\to \R$, we find that $r_n$ satisfies the linear system
\begin{align}\label{PRproblem}
r_n(x) - (1-\alpha) \sum_{y \in X_n} \frac{\omega_{n}(y,x)}{d_n(y)} r_n(y) = \alpha v(x) \ \ \ \text{ for all }x \in X_n,
\end{align} 
where $\alpha\in (0,1]$ is the teleportation probability and $v(x)$ is the teleportation probability distribution. To simplify the problem, we consider the normalized PageRank vector 
\begin{equation}\label{eq:PRnorm}
u_n(x) = \frac{nh^d}{d_n(x)}r_n(x).
\end{equation}
The degree $d_n(x)$ is the most basic measure of the importance of a node in a graph, and the normalized PageRank vector factors out the direct dependence on the degree to give an understanding of the additional geometric structure uncovered by PageRank. We easily see that the normalized PageRank vector $u_n:X_n\to \R$ satisfies the equation
\begin{align}\label{PRproblem_simpler}
d_n(x)u_n(x) - (1-\alpha) \sum_{y \in X_n} \omega_n(y,x) u_n(y) = \alpha nh^d v(x) \ \ \ \text{ for all }x \in X_n.
\end{align} 
We note that \eqref{PRproblem_simpler} is considerably simpler to analyze than \eqref{PRproblem} since the degree term $d_n(y)$ does not appear inside the summation. We rewrite this equation by defining the \textbf{PageRank Operator}
\begin{align} 
\L_n u(x) := \frac{1}{d_n(x)}\sum_{y \in X_n} \omega_n(y,x) u(y) - u(x).
\label{PageRankOp} 
\end{align}
Then \eqref{PRproblem_simpler} can be written as
\begin{equation}\label{PRScheme}
u_n(x) - \gamma \L_n u_n(x) = \frac{nh^d}{d_n(x)}v(x)  \ \ \ \text{for all }x\in X_n,
\end{equation}
where $\gamma = (1-\alpha)/\alpha$. We note that when the graph is symmetric, the PageRank Operator is exactly the random walk graph Laplacian. 

The corresponding problem in the continuum is the, possibly degenerate, elliptic PDE
\begin{align}\label{PRPDE} 
u +\gamma_\eps\rho^{-2}\div(\rho^2b u)- \frac{1}{2}\sigma_\Phi\gamma_h\rho^{-2} \div(\rho^2\nabla u) = \rho^{-1}v \ \ \ \textrm{ on } \mathbb{T}^d, 
\end{align} 
where $\sigma_\Phi= \int_{\R^d} \Phi(|z|)z_1^2 dz$,
\begin{equation}\label{eq:Gammas}
\gamma_\eps = \frac{(1-\alpha)\eps}{\alpha} \ \ \text{ and }\ \ \gamma_h = \frac{(1-\alpha)h^2}{\alpha}.
\end{equation}
We also denote 
\begin{equation}\label{AssumptionforRate}
\eta = \|\rho^{-2}\div(\rho^2b)\|_{L^\infty(\T^d)}.
\end{equation}
Our first main result is the following continuum limit.
\begin{theorem}[Convergence of the $2^{\rm nd}$ Order PageRank Problem]\label{Rate}
Let $\rho\in C^{2,\nu}(\T^d)$, $b\in C^{2,\nu}(\T^d;\R^d)$ and $v\in C^{1,\nu}(\T^d)$ for some $0 < \nu < 1$. Assume that $\gamma_\eps\leq 1$, $0 < \gamma_h \leq 1$, and $\eta < 1$. Let $u_n$ be the solution to the PageRank problem \eqref{PRScheme} and let $u\in C^{3}(\T^d)$ be the solution to the PDE \eqref{PRPDE}. Then there exists $C_1,C_2,c_1,c_2>0$ with $C_1$ depending on $\gamma_h>0$, such that when $\eps + h\leq c_1(1-\eta\gamma_\eps)$ we have that
\begin{equation}\label{ConvRate}
\max_{x\in X_n}| u(x) - u_n(x)| \leq C_1 (1-\eta\gamma_\eps)^{-1}(\lambda + \eps + h)
\end{equation}
holds with probability at least $1- C_2n\exp(-c_2nh^{d+2}\lambda^2) - C_2n\exp\left( -c_2nh^{d+2}(1-\eta\gamma_\eps)^2 \right)$, where $0 < \lambda \leq 1$.
\end{theorem}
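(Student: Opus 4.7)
The plan is to combine a pointwise consistency estimate for the PageRank operator $\mathcal{L}_n$ with a carefully adapted discrete maximum principle, following the maximum-principle blueprint of \cite{calder2018game,calder2019consistency} but with nontrivial modifications to handle the asymmetry of the weights. The unique $C^{3}$ solution $u$ of \eqref{PRPDE} exists by standard elliptic Schauder theory under the stated regularity hypotheses on $\rho$, $b$, and $v$.

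The first main step is pointwise consistency for both sides of \eqref{PRScheme}. I would decompose
\begin{equation*}
\mathcal{L}_n u(x) = \frac{1}{d_n(x)}\sum_y \omega_n(y,x)[u(y)-u(x)] + (\kappa_n(x)-1)u(x),
\end{equation*}
where $\kappa_n(x) := d_n(x)^{-1}\sum_y \omega_n(y,x)$, and similarly analyse the right-hand side $nh^d v(x)/d_n(x)$. For the deterministic (bias) part I take expectations and Taylor expand in both $h$ and $\eps$. Because the directional shift $\eps b(y)$ in $\omega_n(y,x)$ depends on $y$, the change of variable $z = (x-y-\eps b(y))/h$ carries a Jacobian factor $1-\eps\div b + O(\eps^2)$. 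Using the moment identities $\int \Phi(|z|)\,z\,dz = 0$ and $\int \Phi(|z|)z_iz_j\,dz = \sigma_\Phi\delta_{ij}$ together with Taylor expansion of $\rho$, $b$, and $u$, the leading terms assemble exactly into $-\gamma_\eps\rho^{-2}\div(\rho^2 b u) + \tfrac{1}{2}\sigma_\Phi\gamma_h\rho^{-2}\div(\rho^2\nabla u)$ with remainder of order $\gamma_\eps(\eps+h^2) + \gamma_h h = O(\eps+h)$ under $\gamma_\eps,\gamma_h\leq 1$. An analogous expansion gives $\mathbb{E}[nh^d/d_n(x)] = \rho(x)^{-1} + O(\eps+h^2)$. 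For the stochastic (fluctuation) part I apply Bernstein's inequality to $\sum_y\omega_n(y,x)[u(y)-u(x)]$, whose per-term second moment is $O(h^{d+2})$ since $|u(y)-u(x)|^2$ is small on the kernel support; a union bound over the $n$ sample points then yields the probability factor $1 - C_2 n\exp(-c_2 nh^{d+2}\lambda^2)$.

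Combining the bias and fluctuation estimates and using \eqref{PRPDE}, I obtain a consistency error $|\tau_n(x)|\leq C(\lambda+\eps+h)$ with high probability, where $\tau_n := u - \gamma\mathcal{L}_n u - nh^d v/d_n$. Subtracting the discrete equation, the error $e_n := u - u_n$ satisfies $e_n-\gamma\mathcal{L}_n e_n = \tau_n$ on $X_n$, which I rewrite as
\begin{equation*}
\bigl(1+\gamma(1-\kappa_n(x))\bigr)e_n(x) - \frac{\gamma}{d_n(x)}\sum_y \omega_n(y,x)\bigl(e_n(y)-e_n(x)\bigr) = \tau_n(x).
\end{equation*}
Evaluating at a maximizer $x^*\in X_n$ of $e_n$, the summation term is nonpositive, so $\bigl(1+\gamma(1-\kappa_n(x^*))\bigr)\max e_n \leq \tau_n(x^*)$. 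Specializing the pointwise consistency from the previous step to $\phi\equiv 1$ gives $\gamma(1-\kappa_n(x)) = \gamma_\eps\rho^{-2}\div(\rho^2 b)(x) + O(\eps+h+\lambda)$ uniformly in $x$ with high probability, so its absolute value is bounded by $\eta\gamma_\eps + C(\eps+h+\lambda)$. Under the standing assumption $\eps+h\leq c_1(1-\eta\gamma_\eps)$ (and taking $\lambda$ also small relative to $1-\eta\gamma_\eps$), the coefficient $1+\gamma(1-\kappa_n(x^*))$ is bounded below by $\tfrac{1}{2}(1-\eta\gamma_\eps)$, yielding $\max e_n \leq C(1-\eta\gamma_\eps)^{-1}(\lambda+\eps+h)$. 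A symmetric argument for $\min e_n$ then gives \eqref{ConvRate}. The required uniform concentration of $\kappa_n$ contributes the second exponential factor $C_2 n\exp(-c_2 nh^{d+2}(1-\eta\gamma_\eps)^2)$.

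The principal obstacle is the pointwise consistency calculation itself. The $y$-dependence of the drift $\eps b(y)$ in $\omega_n(y,x)$ breaks the symmetry of the weights, so the Taylor expansions of the numerator $\sum_y \omega_n(y,x)u(y)$ and of the normalizer $d_n(x)=\sum_y \omega_n(x,y)$ must be carried out separately and then combined; the divergence form of the limit operator appears only after properly tracking the Jacobian factor $1-\eps\div b$ together with the expansion of $\rho$ around $x+\eps b(x)$ in the denominator. A related difficulty is that the PageRank operator is not zero-sum ($\kappa_n\not\equiv 1$), so the standard maximum principle for graph Laplacians does not directly apply; controlling the resulting zeroth-order piece via the coercivity gap $1-\eta\gamma_\eps$ is what produces the factor $(1-\eta\gamma_\eps)^{-1}$ in \eqref{ConvRate} and is the key novelty relative to the symmetric case.
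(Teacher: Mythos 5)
Your proposal is correct and follows essentially the same route as the paper: pointwise consistency via the $y$-dependent change of variables (tracking the Jacobian factor $1-\eps\div b$) combined with a discrete maximum principle in which the zeroth-order coefficient is controlled by applying consistency to the constant test function $\phi\equiv 1$, producing the $(1-\eta\gamma_\eps)^{-1}$ factor. The paper packages the second step as a separate $\ell_\infty$ stability lemma and your identity $\bigl(1+\gamma(1-\kappa_n(x^*))\bigr)\max e_n\leq\tau_n(x^*)$ is exactly its inequality $\L_n u(x_0)\leq u(x_0)\L_n\phi(x_0)$ at the maximizer, so the two arguments coincide.
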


\begin{remark}
We remark that when $\gamma_h>0$ and $\eta\gamma_\eps<1$, it is a standard result in elliptic PDEs that \eqref{PRPDE} has a unique solution $u\in C^{3,\nu}(\T^d)$. We refer the reader to \cite{Evans,Gilbarg} for more details.
\label{rem:PDE}
\end{remark}

When $\gamma_h=0$ or $\gamma_h>0$ is small, the continuum PDE \eqref{PRPDE} is dominated by the first order terms, and is better approximated by the first order equation
\begin{align}\label{PROp1stOrder} 
u +\gamma_\eps\rho^{-2}\div(\rho^2b u)= \rho^{-1}v \ \ \ \textrm{ on } \mathbb{T}^d.
\end{align} 
We state the first order continuum limit as a separate result.
\begin{theorem}[Convergence of the $1^{\rm st}$ Order PageRank Problem]\label{1stRate}
Let $\rho\in C^{1,1}(\T^d)$, $b\in C^{1,1}(\T^d;\R^d)$, and $v\in C^{0,1}(\T^d)$. Assume that $\gamma_\eps,\gamma_h \leq 1$, $\eta < 1$, and $\|Db\|_{L^\infty(\T^d)}\leq \tfrac{1}{2}(1-\eta\gamma_\eps)$. Let $u_n$ be the solution to the PageRank problem \eqref{PRScheme} and let $u\in C^{0,1}(\T^d)$ be the viscosity solution of the PDE \eqref{PROp1stOrder}. Then there exists $C_1,C_2,c_1>0$ such that
\begin{equation}\label{FirstConvRate}
\max_{x\in X_n}| u(x) - u_n(x)| \leq C_1\sqrt{\lambda + \eps + \gamma_h}
\end{equation}
holds with probability at least $1- C_2n\exp(-c_1nh^{d+2}\lambda^2) - C_2n\exp\left( -c_1nh^{d+2}(1-\eta\gamma_\eps)^2 \right)$, where $0 < \lambda \leq 1$. We note that $C_1$ depends on $1-\eta\gamma_\eps$.
\end{theorem}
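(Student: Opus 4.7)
Since the viscosity solution $u$ of \eqref{PROp1stOrder} is only Lipschitz, the pointwise consistency strategy used in Theorem~\ref{Rate} is inapplicable. I would instead adapt the doubling of variables technique from viscosity solution theory to the discrete-continuum comparison. Introduce a penalization parameter $\delta>0$ (to be chosen) and consider the auxiliary function
\[
\Psi(y,x) := u(y) - u_n(x) - \frac{|y-x|^2}{2\delta},\qquad (y,x)\in \T^d\times X_n.
\]
Let $(y_*,x_*)$ be a maximizer of $\Psi$, write $M:=\max\Psi$ and $p_* := (y_*-x_*)/\delta$. Taking $y=x\in X_n$ shows $u(x)-u_n(x) \le M$, so bounding $M$ from above yields the one-sided estimate; the matching lower bound follows from a symmetric argument in which the roles of $u$ and $u_n$ are swapped.

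To bound $M$ from above, I would combine a viscosity inequality with a discrete one. On the continuum side, the map $y\mapsto u(y) - [u_n(x_*) + |y-x_*|^2/(2\delta)]$ attains its maximum at $y_*$, so the viscosity subsolution property of $u$ at $y_*$ with gradient $p_*$ gives
\[
(1+\gamma_\eps A(y_*))\,u(y_*) + \gamma_\eps\, b(y_*)\cdot p_* \;\leq\; \rho^{-1}(y_*)\,v(y_*),
\]
where $A := \rho^{-2}\div(\rho^2 b)$ satisfies $\|A\|_{L^\infty(\T^d)} \le \eta$. On the discrete side, the quadratic $\phi_*(x) := u_n(x_*) + [|y_*-x_*|^2 - |y_*-x|^2]/(2\delta)$ satisfies $\phi_* \le u_n$ on $X_n$ with equality at $x_*$, so monotonicity of $\L_n$ in off-point values yields $\gamma\L_n u_n(x_*) \ge \gamma\L_n \phi_*(x_*)$. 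Noting $\nabla\phi_*(x_*)=p_*$ and $\nabla^2\phi_*(x_*)=-I/\delta$, I would apply the pointwise consistency of $\gamma\L_n$ developed in the proof of Theorem~\ref{Rate} to the smooth $\phi_*$ together with the discrete scheme \eqref{PRScheme} at $x_*$ to obtain, with high probability,
\[
(1+\gamma_\eps A(x_*))\,u_n(x_*) + \gamma_\eps\, b(x_*)\cdot p_* \;\ge\; \rho^{-1}(x_*)\,v(x_*) - \frac{C\gamma_h}{\delta} - C\Bigl(\lambda + \eps + \gamma_h + \tfrac{\lambda}{\sqrt{\delta}}\Bigr).
\]
The $C\gamma_h/\delta$ term is a ``diffusion defect'': the scheme carries a latent second-order component of strength $\gamma_h$ which, applied to the quadratic penalty with Hessian norm $1/\delta$, contributes this consistency error.

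Subtracting the two inequalities and substituting $u(y_*)-u_n(x_*)=M+|y_*-x_*|^2/(2\delta)$, I would lower-bound the leading left-hand term by $(1-\eta\gamma_\eps)[M+|y_*-x_*|^2/(2\delta)]$ and control the cross-terms $[A(y_*)-A(x_*)]\,u_n(x_*)$, $[b(y_*)-b(x_*)]\cdot p_*$, and $[\rho^{-1}v(y_*)-\rho^{-1}v(x_*)]$ by the Lipschitz regularity of the coefficients combined with $|y_*-x_*|\le C\sqrt\delta$. The structural hypothesis $\|Db\|_{L^\infty(\T^d)}\le (1-\eta\gamma_\eps)/2$ together with $\gamma_\eps\le 1$ ensures $\gamma_\eps\|Db\|_{L^\infty}|y_*-x_*|^2/\delta \le (1-\eta\gamma_\eps)|y_*-x_*|^2/(2\delta)$, absorbing the drift cross-term into the coercive quadratic penalty and leaving an inequality of the form
\[
(1-\eta\gamma_\eps)\,M \;\le\; C\Bigl(\sqrt\delta + \tfrac{\gamma_h}{\delta} + \tfrac{\lambda}{\sqrt\delta} + \lambda + \eps + \gamma_h\Bigr).
\]
Balancing $\delta$ then produces the stated rate $O(\sqrt{\lambda+\eps+\gamma_h})$; combined with the symmetric lower bound, this yields \eqref{FirstConvRate}.

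The main technical obstacle is the diffusion defect $\gamma_h/\delta$: the PageRank scheme always carries an $O(\gamma_h)$ latent second-order component which was harmless in Theorem~\ref{Rate} (where the continuum limit is itself second order) but now appears as a genuine consistency error against the first-order PDE. Carefully balancing this defect against the Lipschitz-in-coefficients term $\sqrt\delta$ and the concentration error $\lambda/\sqrt\delta$ is what forces the square-root scale of the final rate, and the precise hypothesis $\|Db\|_{L^\infty}\le (1-\eta\gamma_\eps)/2$ is calibrated exactly so as to absorb the drift-induced quadratic defect $\gamma_\eps\|Db\|_{L^\infty}|y_*-x_*|^2/\delta$ into the coercivity produced by the doubling penalty.
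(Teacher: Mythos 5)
Your overall strategy is the paper's: doubling of variables, the monotonicity of $\L_n$ applied to the quadratic test function, the first-order pointwise consistency estimate on the discrete side, the viscosity inequality on the continuum side, Lipschitz control of the coefficient cross-terms, and optimization over the penalization parameter. Your idea of absorbing the drift cross-term $\gamma_\eps[b(y_*)-b(x_*)]\cdot p_*$ into the coercive quadratic penalty via the hypothesis $\|Db\|_{L^\infty}\leq\tfrac12(1-\eta\gamma_\eps)$ is a legitimate variant; the paper instead uses that hypothesis inside Lemma~\ref{lem:Lip} to prove $u\in C^{0,1}(\T^d)$ and then controls that cross-term simply because $|x_n-y_n|$ is small.

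There is, however, a concrete quantitative gap: you bound $|y_*-x_*|\leq C\sqrt{\delta}$, which is the estimate one gets from mere boundedness of $u$ and $u_n$. With that bound the remaining cross-terms $[A(y_*)-A(x_*)]u_n(x_*)$ and $\rho^{-1}v(y_*)-\rho^{-1}v(x_*)$ contribute $C\sqrt{\delta}$, and balancing $\sqrt{\delta}$ against the diffusion defect $\gamma_h/\delta$ forces $\delta=\gamma_h^{2/3}$ and yields only the rate $(\lambda+\eps+\gamma_h)^{1/3}$, not the claimed $\sqrt{\lambda+\eps+\gamma_h}$ (note $\gamma_h^{1/3}\geq\gamma_h^{1/2}$ since $\gamma_h\leq 1$). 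To recover the square-root rate you must use the Lipschitz continuity of $u$ from Lemma~\ref{lem:Lip}: the inequality $\Psi(y_*,x_*)\geq\Psi(x_*,x_*)$ gives $|y_*-x_*|^2/(2\delta)\leq u(y_*)-u(x_*)\leq C|y_*-x_*|$, hence $|y_*-x_*|\leq C\delta$. Then the Lipschitz-in-coefficients terms are $O(\delta)$, balancing $\delta$ against $\gamma_h/\delta$ gives $\delta=\sqrt{\gamma_h}$, and the stated rate follows. This is exactly why the theorem (and Lemma~\ref{lem:Lip}) insists on $u\in C^{0,1}(\T^d)$ rather than just $u\in C(\T^d)$; as written, your argument never invokes that regularity and therefore does not reach the advertised exponent.
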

\begin{remark}
When $\eta\gamma_\eps<1$, it is a standard result in viscosity solution theory that  \eqref{PROp1stOrder} has a unique viscosity solution $u\in C(\T^d)$. We prove in Lemma~\ref{lem:Lip} that when $\|Db\|_{L^\infty(\T^d)}\leq \tfrac{1}{2}(1-\eta\gamma_\eps)$ the viscosity solution $u$ is Lipschitz continuous, so $u\in C^{0,1}(\T^d)$. We refer the reader to \cite{Crandall,calder2018lecture} for more details on viscosity solutions.

We note that the continuum PDE \eqref{PRPDE} has reaction, advection, and diffusion terms. The two reaction terms, $u$ and $\rho^{-1}v$, are due to the teleportation step in PageRank. The term $ \div(\rho^2b u)$ is an advection term, which describes the advection of the quantity $\rho^2 u$ along the vector field $b$, and is due to the directional preference in the definition of the weights in a random directed geometric graph. Finally, the weighted diffusion term $\div(\rho^2 \nabla u)$ represents diffusion from the random walk step of PageRank.

Theorem~\ref{Rate} shows that if we scale $\eps_n\sim \alpha_n$ and $h_n\sim \sqrt{\alpha_n}$, then the directional preference along the vector field $b$ is balanced with the diffusion terms, and the limiting PDE \eqref{PRPDE} is second order. If $\eps_n\ll \alpha_n$, then the directional preference is negligible in the limit, and the first order terms in \eqref{PRPDE} disappear in the limit. Theorem~\ref{1stRate} shows that if $h_n\ll \sqrt{\alpha_n}$, then the directional preference term dominates and the diffusion term is negligible, and the continuum PDE reduces to a first order equation \eqref{PROp1stOrder}. If $\alpha_n \gg \max\{\eps_n,h_n^2\}$, then the first and second order terms drop out of \eqref{PRPDE} and we simply get $u = \rho^{-1}v$. The intuition is that the teleportation happens too often and overwhelms the diffusion and directional preferences, yielding a trivial continuum limit. 
\label{rem:reaction}
\end{remark}
\begin{remark}
We can analyze the characteristics of the first order equation \eqref{PROp1stOrder} to understand how the PageRank algorithm propagates information on the directed graph. The characteristic ODEs \cite{Evans} are
\begin{equation}\label{eq:characteristics}
\left\{\begin{aligned}
\dot{p}(s) &=z(s) \nabla (\div b + 2\nabla \log \rho\cdot b)\Big\vert_{x(s)} + Db(x(s))p(s),\\ 
\dot{z}(s) &= b(x(s)) \cdot p(s),\\ 
\dot{x}(s) &= b(x(s)),
\end{aligned}\right.
\end{equation}
where $x(s)$ is the projected characteristic curve, $z(s) = u(x(s))$, and  $p(s) = \nabla u(x(s))$. Hence, information is propagated along the integral curves of the vector field $b$, which represents the directional influence in the random directed geometric graph.
\label{rem:characteristics}
\end{remark}

\begin{remark}
Theorems~\ref{Rate} and \ref{1stRate} are stated as finite sample size results, where $n$, $\eps$, $h$, $\alpha$, and $\lambda$ are fixed. If we consider the continuum limit as $n\to \infty$ and $\eps_n,h_n,\alpha_n,\lambda_n\to 0$, then Theorems~\ref{Rate} and \ref{1stRate} inform us about how to relatively scale the parameters. We always assume $\eps_n\leq \alpha_n$ and $h_n^2 \leq \alpha_n$, so that $\gamma_{\eps_n},\gamma_{h_n}\leq 1$.  Thus, provided that
\begin{equation}\label{eq:almostsure}
\lim_{n\to \infty}\frac{nh_n^{d+2}\lambda_n^2}{\log n}= \infty,
\end{equation}
we may apply the Borel-Cantelli Lemma to conclude that the rates hold almost surely as $n\to \infty$. This allows us to make a suitable choice for $\lambda_n$. Since the convergence rates scale with $\lambda_n$, we choose $\lambda_n\to 0$ as quickly as possible while ensuring that \eqref{eq:almostsure} holds. A reasonable choice is 
\begin{equation}\label{eq:lambdan}
\lambda_n = \frac{\log n}{\sqrt{nh_n^{d+2}}}.
\end{equation}
With this choice of $\lambda_n$, we have the almost sure convergence rate of
\[\O\left( \frac{\log (n)^2}{\sqrt{nh_n^{d+2}}} + \eps_n + h_n \right)\]
in Theorem \ref{Rate}, provided 
\begin{equation}\label{eq:almostsure2}
\lim_{n\to \infty}\frac{nh_n^{d+2}}{\log (n)^2}= \infty.
\end{equation}
The scaling in \eqref{eq:almostsure2} is a standard scaling for pointwise consistency of graph Laplacians. Another way to phrase these results is to make the choice of 
\[\lambda_n = \eps_n+h_n\]
to match the terms in the error estimate in Theorem \ref{Rate}. In this case, we have an almost sure convergence rate of $\O(\eps_n+h_n)$  provided
\begin{equation}\label{eq:almostsure_rate}
\lim_{n\to \infty} \frac{nh_n^{d+2}(\eps_n+h_n)^2}{\log(n)}=\infty.
\end{equation}
The same observations hold in the  context of Theorem \ref{1stRate}, except the rates are worse by a square root. 
\label{rem:scaling}
\end{remark}
\begin{remark}\label{rem:continuum}
Theorems~\ref{Rate} and \ref{1stRate} can easily be rewritten in terms of the true PageRank vector $r_n(x)$. Due to Lemma~\ref{ConMeas1} we have
\[\max_{x\in X_n}| \rho(x) u(x) - r_n(x)| \leq C_1 (1-\eta\gamma_\eps)^{-1}(\lambda + \eps + h)\]
in the context of Theorem~\ref{Rate}, and 
\[\max_{x\in X_n}| \rho(x) u(x) - r_n(x)| \leq C_1\sqrt{\lambda + \eps + \gamma_h}\]
in the context of Theorem~\ref{1stRate}.

The PageRank vector $r_n$ satisfies 
\[\sum_{x\in X_n}r_n(x) = \sum_{x\in X_n}v(x),\]
as can be easily checked by summing both sides of \eqref{PRproblem}. This forces $r_n$ to be a probability distribution provided $v$ is as well. The normalized PageRank vector $u_n$ satisfies
\[\sum_{x\in X_n} \frac{d_n(x)}{ nh^d} u_n(x) = \sum_{x\in X_n}v(x).\]
In the continuum, the solution $u$ of \eqref{PRPDE} or \eqref{PROp1stOrder} satisfies the analogous continuum version 
\[\int_{\T^d}\rho^2 u \, dx = \int_{\T^d}\rho v\, dx,\]
which can be verified by multiplying both sides of \eqref{PRPDE} or \eqref{PROp1stOrder} by $\rho^2$ and integrating by parts.
\end{remark}

\begin{remark}
While the original PageRank problem is an eigenvector problem, the PageRank vector is an eigenvector of a probability transition matrix and not an eigenvector of a graph Laplacian. Thus, we cannot use the spectral properties of the graph Laplacian proven, for example, in \cite{Shi,GarciaEigen,calder2019improved} to address the eigenvalue problem \eqref{PageRankEigensystem}. In fact, since the probability transition matrix becomes localized as $h,\eps\to 0$, we lose the interpretation of PageRank as an eigenvector problem in the continuum. The localization of the probability transition matrix is exactly what leads to a equation with a Laplacian in the continuum. A very simple analogue is the averaging operator
\[T_\eps u(x) := \frac{1}{|B(x,\eps)|}\int_{B(x,\eps)}u(y)\, dy,\]
A function $u$ satisfying $T_\eps u = u$ is an eigenfunction of $T_\eps$ with eigenvalue $\lambda=1$. In PDE-theory, the equation $T_\eps u = u$ is called the mean-value property, and is satisfied by any harmonic function $u$. One can easily check that
\[\frac{1}{\eps^2}(T_\eps u(x) - u(x)) = C\Delta u(x) + O(\eps)\]
for any smooth function $u$, where $C$ depends only on $d$. Hence, as $\eps\to 0$, eigenfunctions of $T_\eps$ are expected to converge to harmonic functions, which are solutions of Laplace's equation $\Delta u =0$. The operator $T_\eps$ localizes and becomes trivial as $\eps\to 0$, since it reduces to pointwise evaluation  $T_0 u(x)=u(x)$. Thus, there is no meaningful way to think of harmonic functions as eigenfunctions of $T_0$. An analogous, but more complicated, phenomenon occurs with the PageRank problem, as it also becomes localized in the continuum limit. 
\end{remark}

As an immediate application of Theorems~\ref{Rate} and \ref{1stRate} we prove asymptotic Lipschitz regularity of the PageRank vector, which shows that the ranking does not vary rapidly in feature space.
\begin{corollary}[Lipschitz regularity]\label{cor:LIP}
Under the assumptions of Theorem~\ref{Rate}, for $0 < \lambda \leq 1$ and  with probability at least $1- C_2n\exp(-c_2nh^{d+2}\lambda^2) - C_2n\exp\left( -c_2nh^{d+2}(1-\eta\gamma_\eps)^2 \right)$ we have
\begin{equation}\label{eq:LIP}
| u_n(x) - u_n(y)| \leq C|x-y| + C_1 (1-\eta\gamma_\eps)^{-1}(\lambda + \eps + h)
\end{equation}
for all $x,y\in X_n$.
\end{corollary}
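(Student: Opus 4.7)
The plan is to deduce the asymptotic Lipschitz estimate for $u_n$ by combining the finite sample size bound of Theorem~\ref{Rate} with the classical Lipschitz regularity of the continuum solution $u$, via a standard triangle inequality. The intuition is that since $u_n$ lies within $C_1(1-\eta\gamma_\eps)^{-1}(\lambda+\eps+h)$ of a smooth function $u$, it must inherit the Lipschitz behavior of $u$ up to that additive error.

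First, I would invoke Remark~\ref{rem:PDE}: under the hypotheses $\rho\in C^{2,\nu}$, $b\in C^{2,\nu}$, $v\in C^{1,\nu}$, $\gamma_h>0$ and $\eta\gamma_\eps<1$, the elliptic PDE \eqref{PRPDE} admits a unique solution $u\in C^{3,\nu}(\T^d)$. In particular, on the compact manifold $\T^d$ the gradient $\nabla u$ is bounded, so $u$ is Lipschitz continuous with some constant $C = \|\nabla u\|_{L^\infty(\T^d)}$ that depends only on the PDE data (and in particular on $\gamma_h$, $\gamma_\eps$, $\eta$, $\rho$, $b$, $v$), but \emph{not} on the random sample $X_n$, on $n$, or on $\lambda$.

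Next, I would apply Theorem~\ref{Rate} to get that, on the high probability event $\Omega_n$ described in the statement,
\begin{equation*}
\max_{z\in X_n}|u(z)-u_n(z)| \leq C_1 (1-\eta\gamma_\eps)^{-1}(\lambda+\eps+h).
\end{equation*}
On this event, for any $x,y\in X_n$, the triangle inequality gives
\begin{equation*}
|u_n(x)-u_n(y)| \leq |u_n(x)-u(x)| + |u(x)-u(y)| + |u(y)-u_n(y)|,
\end{equation*}
and each of the three terms is controlled by the previous two estimates, yielding the desired bound (after absorbing the factor $2$ in front of the error term into $C_1$).

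There is no real obstacle here; the only mild subtlety is to note that the Lipschitz constant $C$ of $u$ is deterministic and is obtained from elliptic regularity for \eqref{PRPDE}, so it is independent of the realization $X_n$, and that it may depend implicitly on $\gamma_h$ (consistent with the fact that $C_1$ in Theorem~\ref{Rate} does as well). The high-probability event in the conclusion of the corollary is inherited verbatim from Theorem~\ref{Rate}.
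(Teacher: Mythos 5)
Your argument is correct and is essentially identical to the paper's proof: both use the triangle inequality $|u_n(x)-u_n(y)| \leq |u_n(x)-u(x)| + |u(x)-u(y)| + |u_n(y)-u(y)|$, bounding the first and third terms by Theorem~\ref{Rate} and the middle term by the Lipschitz continuity of the continuum solution $u$. Your additional remarks on the deterministic nature of the Lipschitz constant coming from elliptic regularity are accurate but not a departure from the paper's route.
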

\begin{proof}
By the triangle inequality
\[|u_n(x)-u_n(y)| \leq |u_n(x) - u(x)| + |u(x) - u(y)| + |u_n(y) -u(y)|.\] 
We estimate the first and third term with Theorem~\ref{Rate}, while the second is estimated by Lipschitzness of $u$.
\end{proof}
\begin{remark}
Corollary~\ref{cor:LIP} proves that $u_n$ is approximately Lipschitz continuous, with jumps of size no larger than $O(\lambda+\eps+h)$. We note that an analogous result to Corollary~\ref{cor:LIP} can be stated under the assumptions of Theorem~\ref{1stRate} as well.
\end{remark}

We conclude this section by presenting an analogous continuum limit result for the evolution of the probability distribution of the random surfer $\r^k$. Similar to Eq.~\eqref{PageRankEigensystem}, the probability distribution $\r^k$ of the random surfer satisfies the evolution equation 
\begin{align}\label{PageRankTransition}
\r^{k+1}= ((1-\alpha)P + \alpha \bv \one^T)\r^k.
\end{align}
Since $\r^k$ is a probability distribution, so $\one^T\r^k=1$, we can also write the equation as
\begin{align}\label{PageRankTransition2}
\r^{k+1}= (1-\alpha)P\r^k + \alpha \bv.
\end{align}
As before,  we denote by $r_n(x,k)$ the $x$-component of $\r^k$; that is, $r_n(x,k)$ is the probability of finding the random surfer at vertex $x$ after $k$ steps on the random directed geometric graph of size $n$.  Plugging this into \eqref{PageRankTransition2} we find that $r_n(x,k)$ satisfies 
\begin{align}\label{PRproblem_time}
r_n(x,k+1) = (1-\alpha) \sum_{y \in X_n} \frac{\omega_{n}(y,x)}{d_n(y)} r_n(y,k) + \alpha v(x) \ \ \ \text{ for all }x \in X_n,
\end{align} 
where $v(x)$ is the teleportation probability distribution. As before, we simplify the problem by defining the normalized distribution $u_n(x,k) := \frac{nh^d}{d_n(x)}r_n(x,k)$ and find that $u_n(x,k)$ satisfies
\begin{equation}\label{eq:PRevolution}
\frac{u_n(x,k+1)-u_n(x,k)}{\alpha} +u_n(x,k) - \gamma\L_n u_n(x,k) = \frac{nh^d}{d_n(x)}v(x) \ \ \text{ for all }x\in X_n,
\end{equation}
For the initial condition we take $u_n(x,0)=g(x)$ for some given smooth function $g$. We can think of \eqref{eq:PRevolution} as a discrete heat equation on the graph, describing the evolution of the normalized distribution $u_n$ of the random surfer. The stationary point of the evolution, as $k\to \infty$, is clearly the solution of the PageRank problem \eqref{PRScheme}.

The continuum version of \eqref{eq:PRevolution} is the reaction-advection-diffusion equation
\begin{equation}\label{eq:continuum}
\left\{\begin{aligned}
u_t + u +\gamma_\eps\rho^{-2}\div(\rho^2bu)- \frac{1}{2}\sigma_\Phi\gamma_h\rho^{-2} \div(\rho^2\nabla u) &= \rho^{-1}v,&&\text{in }\T^d\times \{t>0\}\\ 
u &=g,&&\text{on } \T^d\times \{t=0\}.
\end{aligned}\right.
\end{equation}
This is verified by the following continuum limit result.
\begin{theorem}[Continuum limit for random surfer]\label{TimeRate}
Let $\rho\in C^{2,\nu} (\T^d)$, $b\in C^{2,\nu} (\T^d;\R^d)$, $v\in C^{1,\nu} (\T^d)$, and $g\in C^3(\T^d)$ for some $0 < \nu < 1$. Assume that $\gamma_\eps\leq 1$, $0 < \gamma_h \leq 1$, and $\eta < 1$. Let $u_n(x,k)$ be the solution of \eqref{eq:PRevolution} satisfying $u_n(x,0)=g(x)$, and let $u\in C^{3}(\T^d)$ be the solution to the PDE \eqref{eq:continuum}. Then there exists $C_1,C_2,c_1,c_2>0$ with $C_1$ depending on $\gamma_h>0$, such that when $\eps + h\leq c_1(1-\eta\gamma_\eps)$ and $0< \lambda \leq 1$, the event that 
\begin{equation}\label{TimeConvRate}
\max_{x\in X_n}| u(x,\alpha k) - u_n(x,k)| \leq C_1 \alpha k (\lambda + \eps + h)
\end{equation}
holds for all $k\geq 0$ has probability at least 
\[1- C_2n\exp(-c_2nh^{d+2}\lambda^2) - C_2n\exp\left( -c_2nh^{d+2}(1-\eta\gamma_\eps)^2 \right).\] 
\end{theorem}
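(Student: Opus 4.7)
The plan is to combine a pointwise consistency estimate for the scheme with an $\ell^\infty$ stability bound and a discrete Gronwall iteration, following the same strategy as the proof of Theorem~\ref{Rate} but adapted to the time-dependent setting. I first rewrite \eqref{eq:PRevolution} in the one-step form
\[ u_n(x,k+1) = (1-\alpha) A_n u_n(x,k) + \alpha F_n(x), \qquad A_n u(x) := \frac{1}{d_n(x)}\sum_{y \in X_n}\omega_n(y,x) u(y) = (I+\mathcal{L}_n)u(x), \]
with $F_n(x) := nh^d v(x)/d_n(x)$; its fixed point is the stationary PageRank equation \eqref{PRScheme}. Plugging the continuum solution $u \in C^3$ of \eqref{eq:continuum} into this iteration, Taylor-expanding in $t$ (the $O(\alpha^2)$ remainder is absorbed into $O(\alpha h)$ since $\alpha \leq h^2/\gamma_h$), and combining with the pointwise consistency of $\gamma\mathcal{L}_n$ used in the proof of Theorem~\ref{Rate}, the concentration estimate $F_n = \rho^{-1} v + O(\lambda+\varepsilon+h)$ from Lemma~\ref{ConMeas1}, and the PDE \eqref{eq:continuum}, I will obtain
\[ u(x,\alpha(k+1)) = (1-\alpha) A_n u(x,\alpha k) + \alpha F_n(x) + \alpha\tau_n(x,k), \qquad \max_{x,k}|\tau_n(x,k)| \leq C(\lambda+\varepsilon+h), \]
with the high probability claimed in the theorem.

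Subtracting this identity from the discrete iteration, the error $w_n(x,k) := u(x,\alpha k) - u_n(x,k)$ satisfies the linear recursion $w_n(x,k+1) = (1-\alpha) A_n w_n(x,k) + \alpha\tau_n(x,k)$ with $w_n(\cdot,0) = 0$. The key step is to establish that $\|(1-\alpha) A_n\|_{\ell^\infty\to\ell^\infty} \leq 1$ with high probability. Since $A_n$ is positivity-preserving, $|A_n w(x)| \leq (\widetilde{d}_n(x)/d_n(x))\|w\|_\infty$ where $\widetilde{d}_n(x) := \sum_y\omega_n(y,x)$, and a Taylor/concentration expansion comparing in- and out-degrees (building on the same computations used in Theorem~\ref{Rate}) yields
\[ \frac{\widetilde{d}_n(x)}{d_n(x)} = 1 - \varepsilon \rho^{-2}\operatorname{div}(\rho^2 b)(x) + O(\lambda+\varepsilon h+\varepsilon^2 + h^3) \leq 1 + \varepsilon\eta + O(\lambda+\varepsilon h+\varepsilon^2+h^3) \]
uniformly in $x$ with high probability. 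Multiplying by $(1-\alpha)$ and using $(1-\alpha)\varepsilon = \alpha\gamma_\varepsilon$ gives $1 - \alpha(1-\gamma_\varepsilon\eta) + O(\lambda+\varepsilon h+\varepsilon^2+h^3)$, which under the smallness hypothesis $\varepsilon+h \leq c_1(1-\eta\gamma_\varepsilon)$ (with $c_1$ chosen sufficiently small) is bounded by $1$. Iterating $M_{k+1} \leq M_k + \alpha\|\tau_n\|_\infty$ with $M_0 = 0$ will then yield $\max_x|w_n(x,k)| \leq C_1 \alpha k(\lambda+\varepsilon+h)$, which is \eqref{TimeConvRate}.

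The main obstacle is the stability step. The operator $A_n$ is normalized by out-degrees $d_n(x)$, but summing its weights $\omega_n(y,x)$ over $y$ gives the in-degree $\widetilde{d}_n(x)$; the ratio $\widetilde{d}_n/d_n$ can exceed $1$ by as much as $\varepsilon\eta$ because of the advective directional bias $b$. The precise cancellation needed to prevent exponential blowup in $k$ relies on the identity $(1-\alpha)\varepsilon = \alpha\gamma_\varepsilon$ combined with the standing hypothesis $\eta\gamma_\varepsilon < 1$, which together force the dissipation supplied by the $(1-\alpha)$ reaction-step factor to dominate the advective amplification of the iteration; without these the recursion would grow exponentially in $k$ and the linear-in-$k$ bound would fail.
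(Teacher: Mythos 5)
Your proposal is correct and follows essentially the same route as the paper: the paper also reduces to the one-step recursion, bounds the truncation error by $O(\lambda+\eps+h)$ via Theorem~\ref{ConsistencyResult} and Lemma~\ref{ConMeas1}, and controls the amplification factor by applying $\L_n$ to the constant test function $\phi\equiv 1$ (which is exactly your in-degree/out-degree ratio $\widetilde d_n/d_n$), using $(1-\alpha)\eps=\alpha\gamma_\eps$ and $\eta\gamma_\eps<1$ to get the factor below $1$ before running the discrete Gronwall induction $M_{k+1}=M_k+C\alpha(\lambda+\eps+h)$. Your packaging of the stability step as $\|(1-\alpha)A_n\|_{\ell^\infty\to\ell^\infty}\leq 1$ is just a reformulation of the paper's bound $(1-\alpha)(1+\L_n\phi(x))\leq 1$, so there is no substantive difference.
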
 
Theorem~\ref{TimeRate} shows that the distribution of the random surfer can be approximated by the continuum PDE \eqref{eq:continuum}. The error estimates depend on $\lambda,\eps$ and $h$ in a similar way as in Theorem~\ref{Rate}. The main difference is the appearance of the term $k\alpha$, which corresponds to the time parameter in the continuum PDE \eqref{eq:continuum}, and is due to the accumulation of pointwise consistency errors over $k$ steps. 

\begin{remark}
A first order version of Theorem \ref{TimeRate} can be proved, similar to Theorem \ref{1stRate}. Since the statement and proof are very similar to Theorem \ref{1stRate}, we omit the details.
\label{rem:1sttime}
\end{remark}
\begin{remark}
We mention that another interesting perspective is the inverse problem of using graph-based numerical schemes, like the PageRank scheme, to numerically solve the continuum reaction-advection-diffusion equations. Modulo technical details, all of the results in this paper can be extended to the manifold setting, where $\T^d$ is replaced by a smooth compact and connected manifold $\mathcal{M}$ of dimension $d$ embedded in $\R^D$ where $d < D$. Since graph-based numerical schemes learn the geometry of the manifold automatically, they may provide convenient numerical methods for solving PDEs on manifolds. We mention that ideas along these lines were mentioned in  \cite{Burago} for approximating the spectrum of the Laplace-Beltrami operator on a manifold of dimension higher than $2$ or $3$, where finite-element methods become cumbersome. This is an interesting direction to explore in future research.
\label{rem:PDEsolvers}
\end{remark}

\subsection{Outline}
The paper will be organized as follows. In Section~\ref{consistency} we prove pointwise consistency with high probability for the PageRank operator $\L_n$ on a random directed geometric graph. This includes pointwise consistency to both first and second order continuum operators. In Section~\ref{ConvergenceProofs} we prove our main results, Theorems~\ref{Rate}, \ref{1stRate}, and \ref{TimeRate}. Lastly, in Section~\ref{Numerical}, we present some numerical results to support our arguments. 
 
\section{Consistency for \texorpdfstring{$\L_n$}{Ln}}\label{consistency}

In this section, we prove pointwise consistency for the operator $\L_n$ with both first and second order continuum operators. Throughout this section and the rest of the paper, we write $\omega_{xy}=\omega_n(x,y)$ and $d_x=d_n(x)$ for simplicity.

\subsection{Concentration of measure and change of variables}
We first recall a concentration inequality from \cite{calder2018game}. 
\begin{lemma}[{\cite[Remark 7]{calder2018game}}]\label{ConMeas}
Let $Y_1, Y_2, \dots, Y_n$ be a sequence of i.i.d.~random variables on $\mathbb{R}^d$ with density $f: \mathbb{R}^d \to \mathbb{R}$, let $\psi: \mathbb{R}^d \to \mathbb{R}$ be bounded and Borel measurable with compact support in a bounded open set $\Omega \subset \mathbb{R}^d$, and define $$Y= \sum_{i=1}^n \psi(Y_i).$$ Then for any $0 \leq \lambda \leq 1$, 
\begin{align} 
\mathbb{P} \Big[|Y-\mathbb{E}(Y)| > \|f\|_\infty \|\psi\|_\infty n |\Omega| \lambda \Big] \leq 2 \exp\left(-\frac{1}{4} \|f\|_\infty n |\Omega| \lambda^2 \right), \label{ConMeasFormat} 
\end{align}
where $|\Omega|$ denotes the Lebesgue measure of $\Omega$. 
\end{lemma}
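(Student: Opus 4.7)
The plan is to apply Bernstein's inequality to the centered i.i.d.\ sum $Y - \mathbb{E}(Y) = \sum_{i=1}^n (\psi(Y_i) - \mathbb{E}\psi(Y_i))$. The two ingredients needed are an envelope on each summand and a bound on the sum of second moments. The envelope is immediate: $|\psi(Y_i)| \leq \|\psi\|_\infty$. For the second moment, the support condition $\operatorname{supp}(\psi) \subset \Omega$ is what makes the bound sharp, giving
\[\mathbb{E}[\psi(Y_i)^2] = \int_\Omega \psi(y)^2 f(y)\, dy \leq \|f\|_\infty \|\psi\|_\infty^2 |\Omega|,\]
and this also controls the variance. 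Summing, the total variance proxy is $V = n \|f\|_\infty \|\psi\|_\infty^2 |\Omega|$.

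With this data, Bernstein's inequality yields a two-sided deviation bound of the form
\[\mathbb{P}[|Y - \mathbb{E}(Y)| \geq t] \leq 2\exp\!\left( -\frac{t^2}{2V + \tfrac{2}{3}\|\psi\|_\infty t} \right).\]
The next step is to plug in the specific choice $t = \|f\|_\infty \|\psi\|_\infty n |\Omega| \lambda$, which is exactly the deviation appearing on the left-hand side of the lemma. After cancellation of the common factor $n\|f\|_\infty \|\psi\|_\infty^2 |\Omega|$ between numerator and denominator, the exponent reduces to $\|f\|_\infty n |\Omega| \lambda^2$ divided by a factor linear in $\lambda$. The hypothesis $\lambda \leq 1$ then bounds that factor by an absolute constant, producing an exponent of at least $\tfrac{3}{8}\|f\|_\infty n |\Omega| \lambda^2$, which is comfortably stronger than the claimed constant $\tfrac{1}{4}$.

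There is no real obstacle; this is essentially a one-line invocation of a classical tail bound, and I would expect the writeup to occupy a short paragraph. The one conceptual point worth emphasising is that Bernstein, rather than Hoeffding, is necessary here. Hoeffding would yield an exponent of order $\|f\|_\infty^2 n |\Omega|^2 \lambda^2$, which is smaller than the Bernstein exponent by a factor of $\|f\|_\infty |\Omega|$ and therefore far too weak in the small-bandwidth regime $|\Omega| \sim h^d$ in which the lemma is ultimately used. The role of the variance estimate, and hence of the support assumption on $\psi$, is precisely to remove $\|\psi\|_\infty$ from the exponent and replace it by $\|f\|_\infty |\Omega|$, which matches the $nh^d$ scaling that drives the probabilistic terms in Theorems~\ref{Rate}, \ref{1stRate}, and \ref{TimeRate}. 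The restriction $\lambda \leq 1$ plays its usual role of keeping the linear correction in the Bernstein denominator dominated by the quadratic term, i.e.\ keeping the tail in its sub-Gaussian regime.
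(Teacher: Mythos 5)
The paper does not prove this lemma at all---it is quoted verbatim from \cite[Remark 7]{calder2018game}, and the proof there is precisely the Bernstein argument you give: use the support condition to bound the second moment by $\|f\|_\infty\|\psi\|_\infty^2|\Omega|$, apply Bernstein's inequality, and use $\lambda\leq 1$ to absorb the linear term in the denominator. Your argument is correct, including the (genuinely important) observation that Hoeffding is too weak here because it cannot exploit the small variance coming from $|\Omega|\sim h^d$; the only nit is that the centered summands $\psi(Y_i)-\mathbb{E}\psi(Y_i)$ have envelope $2\|\psi\|_\infty$ rather than $\|\psi\|_\infty$, which turns your exponent constant $\tfrac{3}{8}$ into $\tfrac{3}{10}$---still comfortably above the claimed $\tfrac14$.
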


When we apply the lemma to the degree $d_y$, we need to compute the expected value of $d_y$, which is the integral 
\[\mathbb{E}(d_y) = n \int_{E_y} \Phi\left(\frac{|x-y-\varepsilon b(y)|}{h}\right) \rho(x) dx.\]
For this computation, and others, we require asymptotic expansions in the change of variables formulas, which is provided by the following result.

\begin{lemma}[Change of Variables]\label{ApplyCoV}
Let $g:\R^d\to \R$ be continuous and assume $b$ is $C^1$. Then using the change of variables $z = \frac{x-y-\varepsilon b(y)}{h}$, we have
\begin{equation}\label{eq:intx}
\int_{\R^d} \Phi\left(\frac{|x-y-\varepsilon b(y)|}{h}\right) g(x) dx = h^d\int_{\R^d} \Phi\left(|z|\right) g(y+hz + \eps b(y)) dz,
\end{equation}
and
\begin{align}\label{eq:inty}
&\int_{\R^d} \Phi\left(\frac{|x-y-\varepsilon b(y)|}{h}\right) g(y) dy \\
&= h^d\int_{\R^d} \Phi\left(|z|\right) g(x-hz-\eps b(x) + \mathcal{O}(\eps h + \eps^2)) \left(1-\varepsilon \div b(x) + \mathcal{O}(\varepsilon h + \varepsilon^2)\right) dz,\notag
\end{align}
for sufficiently small $\eps>0$.
\end{lemma}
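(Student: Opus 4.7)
The first identity is immediate: for fixed $y$, the map $x\mapsto z=(x-y-\varepsilon b(y))/h$ is an affine bijection of $\R^d$ with Jacobian determinant $h^{-d}$, so $dx=h^d\,dz$ and the formula follows by direct substitution. The second identity is the substantive one because $y$ appears both as the integration variable and inside $b(y)$, so I would proceed by inverting $T(y):=(x-y-\varepsilon b(y))/h$ and computing the Jacobian to the appropriate order. Its differential is $DT(y)=-h^{-1}(I+\varepsilon Db(y))$; since $b$ is $C^1$, $\|Db\|_{L^\infty}<\infty$, so for $\varepsilon$ sufficiently small the matrix $I+\varepsilon Db(y)$ is invertible uniformly in $y$, and $T$ is a diffeomorphism onto its image (either by the inverse function theorem or by a contraction-mapping argument applied to the fixed-point equation $y=x-hz-\varepsilon b(y)$ for each fixed $z$).

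The plan then is to produce two asymptotic expansions, one for $T^{-1}(z)$ and one for $|\det DT|^{-1}$, and substitute them into the change-of-variables formula. For the inverse, I start from the crude estimate $y=x-hz+\mathcal{O}(\varepsilon)$, plug it back into $y=x-hz-\varepsilon b(y)$, and Taylor expand $b(y)=b(x)+Db(x)(y-x)+\mathcal{O}(|y-x|^2)$ with $|y-x|=\mathcal{O}(h+\varepsilon)$; this yields
\begin{equation*}
y=x-hz-\varepsilon b(x)+\mathcal{O}(\varepsilon h+\varepsilon^2),
\end{equation*}
uniformly for $z$ in the compact support of $\Phi$. For the Jacobian, the standard expansion $|\det(I+\varepsilon A)|=1+\varepsilon\,\mathrm{tr}(A)+\mathcal{O}(\varepsilon^2)$ gives
\begin{equation*}
|\det DT(y)|^{-1}=h^d\bigl(1-\varepsilon\div b(y)+\mathcal{O}(\varepsilon^2)\bigr),
\end{equation*}
and using $y=x+\mathcal{O}(h+\varepsilon)$ together with the continuity of $\div b$, I replace $\div b(y)$ by $\div b(x)$, with the difference absorbed (after multiplication by $\varepsilon$) into the $\mathcal{O}(\varepsilon h+\varepsilon^2)$ term. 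Combining the two expansions and substituting into $\int \Phi(|T(y)|)g(y)\,dy=\int\Phi(|z|)g(T^{-1}(z))|\det DT(T^{-1}(z))|^{-1}\,dz$ produces the stated identity.

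The main obstacle is not conceptual but rather the careful bookkeeping of error terms: one must propagate the errors consistently through the fixed-point iteration, the determinant expansion, and the replacement of $y$ by $x-hz-\varepsilon b(x)$ inside $\div b$, making sure every $\mathcal{O}(h+\varepsilon)$ factor that appears alongside an $\varepsilon$ contributes only to $\mathcal{O}(\varepsilon h+\varepsilon^2)$. Uniformity of the $\mathcal{O}$-terms in $z$ is ensured by the compact support of $\Phi$, and smallness of $\varepsilon$ enters only through the invertibility of $I+\varepsilon Db$, which guarantees $T^{-1}$ exists globally on the relevant domain.
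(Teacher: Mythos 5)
Your proof is correct and follows essentially the same route as the paper's: the same contraction-mapping/inverse-function-theorem argument for invertibility of $y\mapsto(x-y-\varepsilon b(y))/h$, the same expansion $\det(I+\varepsilon A)=1+\varepsilon\,\mathrm{tr}(A)+\mathcal{O}(\varepsilon^2)$, and the same replacement of $y$ by $x-hz-\varepsilon b(x)+\mathcal{O}(\varepsilon h+\varepsilon^2)$ using the compact support of $\Phi$. The one slightly loose step — absorbing $\varepsilon(\div b(y)-\div b(x))$ into $\mathcal{O}(\varepsilon h+\varepsilon^2)$ really needs $Db$ Lipschitz rather than merely continuous — is present in the paper's own proof as well, and is harmless in the regularity regimes where the lemma is applied.
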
 
\vspace{-8mm}
\begin{proof}
The proof is a straightforward change of variables when we integrate over $x$ in \eqref{eq:intx}. In the case where we integrate over $y$ in \eqref{eq:inty}, we define the change of variables function $G:\R^d\to \R^d$ by
\[G(y) = \frac{x-y-\eps b(y)}{h},\]
where the vector field $b$ is extended periodically from  $\T^d$ to $\R^d$. For $\eps$ sufficiently small, $G$ is a $C^1$ diffeomorphism of $\R^d$. Indeed, to see that $G$ is bijective, first note that the problem of solving $G(y)=z$ for $y$, given $z$, is equivalent to the fixed point problem
\[y = \Psi(y):= x - zh - \eps b(y).\]
The mapping $\Psi:\R^d\to \R^d$ is a contraction, provided $\eps <\|b\|_{C^{0,1}}^{-1}$, and so the invertibility of $G$ follows from Banach's fixed point theorem. For $\eps <\|b\|_{C^{0,1}}^{-1}$ the Jacobian matrix $D_y G(y) = -h^{-1}(I + \eps D_y b(y))$ is invertible, and so the smoothness of $G^{-1}$ follows from the inverse function theorem.

We now use the change of variables $z = G(y)$ in \eqref{eq:inty} to obtain
\begin{equation}\label{eq:change}
\int_{\R^d} \Phi\left(\frac{|x-y-\varepsilon b(y)|}{h}\right) g(y) dy = \int_{\R^d} \Phi\left(|z|\right) g(G^{-1}(z)) |\det D_y G(G^{-1}(z))|^{-1} dz.
\end{equation}
In the rest of the proof we write $y=G^{-1}(z)$ for convenience. Then we have
\begin{equation}\label{eq:jacobian}
|\det D_y G(y)|^{-1} = h^d |\det(I+\eps Db(y))|^{-1}.
\end{equation}
We now use the Taylor expansion
\[\det(I + \eps A) = 1 + \eps\text{Tr}(A) + \O(\eps^2)\]
for $A=Db(y)$ to obtain
\[\det(I + \eps Db(y)) = 1 + \eps \text{Tr}(Db(y)) + \O(\eps^2) = 1 + \eps\div b(x) + \O(\eps h + \eps^2).\]
Thus, for sufficiently small $\eps>0$ we have
\begin{equation}\label{eq:Jtaylor}
|\det D_y G(y)|^{-1} = h^d(1 - \eps\div b(x) + \O(\eps h + \eps^2)).
\end{equation}
Since $\Phi(t)=0$ for $t>2$ we have that $|x-y| \leq 2h + \eps |b(y) | \leq C(h+\eps)$,
and so
\begin{equation}\label{eq:ychange}
G^{-1}(z) = y = x - h z - \eps b(y) =  x-hz - \eps b(x) + \O(\eps h+\eps^2).
\end{equation}
Substituting  \eqref{eq:Jtaylor} and \eqref{eq:ychange} into \eqref{eq:change} completes the proof.
\end{proof}

\subsection{Pointwise consistency}

We now turn to the main pointwise consistency results. We begin with a standard result for the degree.
\begin{lemma}[Asymptotics for the degree] \label{ConMeas1}
For any $0 < \lambda \leq 1$, the degree term satisfies 
\begin{align*}
d_y &= nh^d  \rho(y) + \mathcal{O}(nh^d(\lambda + \varepsilon + h^2))
\end{align*}
with probability at least $1-2\exp(-c\lambda^2 n h^d)$, where $c$ is a constant independent of $n$. 
\end{lemma}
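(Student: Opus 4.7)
The plan is to write $d_y$ as a sum of $n$ i.i.d.\ random variables and decompose the analysis into (i) computing the expectation asymptotically via the change-of-variables Lemma~\ref{ApplyCoV}, and (ii) controlling the deviation from the expectation using the concentration inequality of Lemma~\ref{ConMeas}. These two steps are then combined by the triangle inequality.

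For the expectation, note that
\[
\mathbb{E}[d_y] \;=\; n\int_{\R^d}\Phi\!\left(\tfrac{|x-y-\eps b(y)|}{h}\right)\rho(x)\,dx \;=\; nh^d\int_{\R^d}\Phi(|z|)\,\rho\!\left(y+hz+\eps b(y)\right)dz
\]
by the first part of Lemma~\ref{ApplyCoV}. I would then Taylor expand $\rho$ around $y$ to second order, writing
\[
\rho(y+hz+\eps b(y)) = \rho(y) + \nabla\rho(y)\cdot(hz + \eps b(y)) + \O((h+\eps)^2).
\]
The crucial observation is that the $hz$ term integrates to zero by radial symmetry of $\Phi$ (since $\int \Phi(|z|)\,z\,dz = 0$), while the $\eps b(y)$ contribution is harmlessly absorbed into an $\O(\eps)$ error. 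Combined with the normalization $\int_{B(0,2)}\Phi(|z|)\,dz=1$, this gives
\[
\mathbb{E}[d_y] = nh^d \rho(y) + \O\!\left(nh^d(\eps+h^2)\right).
\]

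For the probabilistic step, I would apply Lemma~\ref{ConMeas} with $\psi(x) = \Phi(|x-y-\eps b(y)|/h)$, which is bounded by $\|\Phi\|_\infty$ and supported in the ball $E_y \subset B(y+\eps b(y), 2h)$ with $|E_y| = \O(h^d)$. Lemma~\ref{ConMeas} then yields
\[
\mathbb{P}\!\left[\,|d_y - \mathbb{E}[d_y]| > C\|\rho\|_\infty\|\Phi\|_\infty\,nh^d\lambda\,\right] \leq 2\exp\!\left(-c\,\|\rho\|_\infty nh^d\lambda^2\right)
\]
for a dimensional constant, so with the claimed probability we have $d_y = \mathbb{E}[d_y] + \O(nh^d\lambda)$. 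Combining this with the expectation estimate via the triangle inequality gives the desired bound.

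I do not foresee a serious obstacle here: both ingredients (the change of variables and the concentration inequality) are already available, and the only mildly delicate point is invoking radial symmetry to ensure the first-order $h$ term vanishes in the expectation (otherwise one would only obtain $\O(h)$ rather than $\O(h^2)$ in the deterministic error). I would also briefly remark that the extra $\O(\eps h + \eps^2)$ terms appearing in the second formula of Lemma~\ref{ApplyCoV} are not needed here since we integrate in $x$, not $y$; the simpler identity \eqref{eq:intx} suffices.
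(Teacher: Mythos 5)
Your proposal is correct and follows essentially the same route as the paper: compute $\mathbb{E}[d_y]$ via the change of variables \eqref{eq:intx} and a Taylor expansion of $\rho$ (with the first-order $hz$ term vanishing by radial symmetry of $\Phi$), then apply Lemma~\ref{ConMeas} for the $\O(nh^d\lambda)$ fluctuation and combine. You in fact make explicit two points the paper leaves implicit, namely the symmetry cancellation that upgrades $\O(h)$ to $\O(h^2)$ and the fact that only the simpler $x$-integration formula of Lemma~\ref{ApplyCoV} is needed.
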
 
\begin{proof}
We use part $(i)$ in Proposition~\ref{ApplyCoV} to compute
\begin{align*}
\mathbb{E}(d_y) = \mathbb{E}\left(\sum_x \omega_{yx} \right) &= n \int_{E_y} \Phi\left(\frac{|x-y-\varepsilon b(y)|}{h}\right) \rho(x) dx \\ 
&= nh^d \int_{B(0,2)}\Phi(|z|) \rho\left(y+hz+ \varepsilon b(y)\right) dz \\ 
&= nh^d\left( \rho(y) + \mathcal{O}(\varepsilon + h^2)\right).  
\end{align*}
By Lemma~\ref{ConMeas}, we see that 
\begin{align*}
\mathbb{P} \left[ \Big| d_y - \mathbb{E}[d_y]\Big| > C\lambda nh^d \right] \leq 2\exp(-\frac{1}{4} cnh^d \lambda^2)
\end{align*}
for any $0 \leq \lambda \leq 1$. Combining the observations above completes the proof.
\end{proof}

We now state and prove the consistency result. 
\begin{theorem}[Consistency for the $2^{\rm nd}$ order PageRank Operator]\label{ConsistencyResult}
 There exists constants $C, c >0$ such that for any $0 < \lambda \leq 1$ the event that
\begin{align}\label{eq:consistency}
\frac{d_x}{\rho(x)nh^d}\L_n \varphi(x) &=-\rho^{-2}\div(\rho^2b\phi)\eps + \frac{\sigma_\Phi}{2}\rho^{-2} \div(\rho^2\nabla \varphi) h^2\Big|_x\\
&\hspace{1in}+\O\left((\lambda h + \lambda h^{-1}\eps + \varepsilon^2 + h^3 + \eps h) \|\varphi\|_{C^{2,1}(\T^d)} \right) \notag
\end{align}
holds for all $\phi\in C^{2,1}(\T^d)$ and $x\in X_n$ has probability at least $1-Cn\exp(-cnh^d \lambda^2)$. 
\end{theorem}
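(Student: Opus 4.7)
The plan is to prove pointwise consistency in two steps: first reduce the random sums to their expectations via Lemma~\ref{ConMeas}, then compute those expectations with Lemma~\ref{ApplyCoV} and Taylor expansion. The organising idea is to isolate the dependence on $\varphi$ so that concentration is applied only to $\varphi$-free random sums; a single good event then handles all $\varphi\in C^{2,1}(\T^d)$ simultaneously, and a union bound over $x\in X_n$ gives the stated probability $1-Cn\exp(-cnh^d\lambda^2)$.

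Writing $D_x := \sum_{y\in X_n}\omega_n(y,x)$, I split
\[
\frac{d_x}{\rho(x) n h^d}\L_n \varphi(x) = \underbrace{\frac{1}{\rho(x) n h^d}\sum_{y\in X_n}\omega_n(y,x)\bigl[\varphi(y)-\varphi(x)\bigr]}_{=:A} + \underbrace{\frac{\varphi(x)\bigl(D_x - d_x\bigr)}{\rho(x) n h^d}}_{=:B}.
\]
I would Taylor-expand $\varphi(y)-\varphi(x)=\nabla\varphi(x)\cdot(y-x)+\tfrac{1}{2}(y-x)^{T}D^{2}\varphi(x)(y-x)+R(y,x)$, with $|R|\le C|y-x|^{3}\|\varphi\|_{C^{2,1}}$, thereby reducing $A$ to a linear combination of the $\varphi$-free sums $V:=\sum_y\omega_n(y,x)(y-x)$ and $M:=\sum_y\omega_n(y,x)(y-x)(y-x)^{T}$ plus a cubic remainder of size $C(\eps+h)^{3}\|\varphi\|_{C^{2,1}}$ controlled deterministically against $D_x$. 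The handling of $B$ is the delicate step: naively concentrating $D_x$ and $d_x$ separately would cost $\mathcal{O}(\lambda)\|\varphi\|_\infty$, which is far too large. To avoid this I would Taylor-expand $\Phi$ in its argument to obtain
\[
\omega_n(y,x)-\omega_n(x,y) = \tfrac{\eps}{h}\Phi'\!\left(\tfrac{|y-x|}{h}\right)\tfrac{(b(x)+b(y))\cdot(y-x)}{|y-x|} + \mathcal{O}(\eps^{2}/h^{2}),
\]
so that $\|\omega_n(\cdot,x)-\omega_n(x,\cdot)\|_\infty=\mathcal{O}(\eps/h)$; applying Lemma~\ref{ConMeas} directly to the single sum $D_x-d_x$ then produces a concentration error of order $\lambda\,\eps h^{-1}\|\varphi\|_\infty$, which is exactly the $\lambda h^{-1}\eps$ contribution in the claimed bound.

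For the expectations I would apply Lemma~\ref{ApplyCoV}: formula~\eqref{eq:inty} is used for $\mathbb{E}[V]$, $\mathbb{E}[M]$, and $\mathbb{E}[D_x]$ (integration over $y$ through the $y$-dependent drift, producing the Jacobian factor $1-\eps\div b(x)+\mathcal{O}(\eps h+\eps^{2})$), whereas $\mathbb{E}[d_x]$ uses the trivial linear change of variables $z=(y-x-\eps b(x))/h$ with no Jacobian correction. Taylor-expanding $\rho$ to second order and using the isotropy identities $\int\Phi(|z|)z\,dz=0$ and $\int\Phi(|z|)z_iz_j\,dz=\sigma_\Phi\delta_{ij}$, after dividing by $\rho(x)nh^d$ one obtains
\[
\mathbb{E}[A] = -\eps\, b\cdot\nabla\varphi + \tfrac{\sigma_\Phi}{2}h^{2}\bigl(\Delta\varphi + 2\rho^{-1}\nabla\rho\cdot\nabla\varphi\bigr) + \mathcal{O}\!\left((\eps^{2}+\eps h+h^{3})\|\varphi\|_{C^{2,1}}\right),
\]
\[
\mathbb{E}[B] = -\eps\,\rho^{-2}\div(\rho^{2}b)\,\varphi + \mathcal{O}\!\left((\eps^{2}+\eps h+h^{3})\|\varphi\|_\infty\right).
\]
The product-rule identities $\rho^{-2}\div(\rho^{2}b\varphi)=\rho^{-2}\div(\rho^{2}b)\varphi + b\cdot\nabla\varphi$ and $\rho^{-2}\div(\rho^{2}\nabla\varphi)=\Delta\varphi+2\rho^{-1}\nabla\rho\cdot\nabla\varphi$ then collapse the leading terms into the claimed advection--diffusion pair.

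I expect the main obstacle to be the coupled estimate for $B$. Probabilistically, the $\mathcal{O}(\eps/h)$ pointwise cancellation is essential to beat the naive $\mathcal{O}(\lambda)$ concentration error; deterministically, the asymmetry between the change of variables used in $\mathbb{E}[D_x]$ (with Jacobian correction) and in $\mathbb{E}[d_x]$ (no correction) must combine to produce the clean divergence $\rho^{-1}\div(\rho^{2}b)=\rho\div b+2b\cdot\nabla\rho$ when the two expectations are subtracted. Once this divergence-form identity is verified, the remaining error terms $\lambda h,\lambda h^{-1}\eps,\eps^{2},\eps h,h^{3}$ fall out of routine polynomial-moment computations against $\Phi$ together with product-rule algebra in $\rho$ and $b$.
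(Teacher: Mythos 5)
Your proposal follows essentially the same route as the paper's proof: the same decomposition into the $\varphi(x)\sum_{y}(\omega_{yx}-\omega_{xy})$ term plus Taylor-expanded first- and second-moment sums, the same key observation that $|\omega_{yx}-\omega_{xy}|=\O(\eps/h)$ so that concentrating the single difference sum yields the $\lambda h^{-1}\eps$ error, the same change-of-variables expansions from Lemma~\ref{ApplyCoV}, and the same product-rule identities to assemble the divergence-form terms. The only detail worth adding is that the union bound over $x\in X_n$ requires first conditioning on the sample point $x=x_i$ so that the remaining $n-1$ points are i.i.d.\ (the paper does this explicitly), but this is a routine fix that changes nothing substantive.
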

\begin{proof}
Fix $x\in \T^d$, take $\varphi \in C^{2,1}(\mathbb{R}^d)$ to be a test function, and let $p = D\varphi(x)$ and $a_{ij} = \varphi_{x_ix_j}(x)$. We apply the operator $\L_n$ to $\varphi$ at $x$ and take a second-order Taylor expansion at $x$ of the $\varphi(y)$ inside the summation, which gives us
\begin{align*}
d_x\L_n \varphi(x) &=\sum_{y\in X_n\setminus \{x\}} (\omega_{yx} \varphi(y) - \omega_{xy}\phi(x))\nonumber\\
&= \sum_{y\in X_n\setminus \{x\}}(\omega_{yx} - \omega_{xy})\phi(x) +\sum_{i=1}^d p_i\sum_{y\in X_n\setminus \{x\}}\omega_{yx}(y_i-x_i)\\ 
&\hspace{0.35in} + \frac{1}{2} \sum_{i,j=1}^da_{ij}\hspace{-2mm} \sum_{y\in X_n\setminus \{x\}} \hspace{-2mm}\omega_{yx} (y_i - x_i)(y_j-x_j) + \O\left((\varepsilon^3 + h^3) \beta\hspace{-2mm}\sum_{y\in X_n\setminus \{x\}} \omega_{yx} \right),
\end{align*}
where $\beta=\|\phi\|_{C^{2,1}(\T^d)}$, and the $\varepsilon^3 + h^3$ in the remainder term comes from the scaling of $|y-x|$. Since $\varphi(x)$ and its derivatives are factored out from the summations, the probability estimates that follow are independent of $\phi$ and hold uniformly over all smooth test functions. 

Noting that $|\omega_{xy}-\omega_{yx}|\leq Ch^{-1}\eps$, we have by Lemma~\ref{ConMeas} that each of
\begin{align*}
\sum_{y\in X_n\setminus \{x\}} \omega_{yx}&\leq  C nh^d,\\
\frac{1}{n}\hspace{-1mm}\sum_{y\in X_n\setminus \{x\}} (\omega_{yx}-\omega_{xy}) &=\int_{\T^d} (\omega_{yx}-\omega_{xy})\rho(y)dy +\mathcal{O}\left(\lambda h^{-1}\eps\right),  \\
\frac{1}{n}\hspace{-1mm}\sum_{y\in X_n\setminus \{x\}} \omega_{yx} (y_i -x_i) &=\int_{\T^d} \omega_{yx}(y_i-x_i)\rho(y) dy +\mathcal{O}\left(\lambda(\eps+h)\right), \textrm{ and } \\
\frac{1}{n}\hspace{-1mm}\sum_{y\in X_n\setminus \{x\}} \omega_{yx} (y_i -x_i)(y_j-x_j) &=  \int_{\T^d} \omega_{yx} (y_i -x_i)(y_j-x_j)\rho(y)dy  + \mathcal{O}\left(\lambda(\eps^2+h^2)\right),
\end{align*} 
hold with probability at least $1-2\exp\left( -cnh^d\lambda^2 \right)$ for any $0< \lambda \leq 1$. 
Combining the observations above we have with probability at least $1-C\exp\left( -cnh^d\lambda^2 \right)$ that
\begin{align}\label{eq:nonlocal}
\frac{d_x}{nh^d}\L_n \varphi(x) &=\frac{1}{h^d}\int_{\T^d} (\omega_{yx}-\omega_{xy})\rho(y)\, dy \phi(x) + \sum_{i=1}^d\frac{p_i}{h^d}\int_{\T^d}\omega_{yx}(y_i-x_i)\rho(y) \, dy\\
&\hspace{1in}+ \frac{1}{2} \sum_{i,j=1}^d\frac{a_{ij}}{h^d} \int_{\T^d} \omega_{yx} (y_i - x_i)(y_j-x_j)\rho(y)\, dy\notag \\
&\hspace{2in}+\O\left(\lambda(h + h^{-1}\eps)\beta + (\varepsilon^3 + h^3) \beta \right). \notag
\end{align}

We now compute asymptotic expansions for all the terms in \eqref{eq:nonlocal}. By Lemma~\ref{ApplyCoV} we have
\begin{align*}
\frac{1}{h^d}&\int_{\T^d}\omega_{yx}\rho(y)\, dy\\
&=\frac{1}{h^d}\int_{\T^d}\Phi\left(\frac{\left|x-y-\varepsilon b(y)\right|}{h}\right) \rho(y)\, dy\\
&=\underbrace{\int_{B(0,2)}\Phi(|z|)\rho(x - hz - \eps b(x) + \O(\eps h + \eps^2))\, dz}_{A}\, (1 - \eps \div b(x)  + \O(\eps h + \eps^2)).
\end{align*}
We now compute
\begin{align*}
A&=\int_{B(0,2)}\Phi(|z|)\left( \rho(x) - \nabla \rho(x)\cdot (hz+\eps b(x)) + \frac{h^2}{2}z^T\nabla^2 \rho(x)z + \O(\eps h + \eps^2)\right)\, dz\\
&=\rho(x) - \nabla \rho(x)\cdot b(x)\eps + \frac{\sigma_\Phi }{2}\Delta \rho(x) h^2 + \O(\eps h + \eps^2).
\end{align*}
Therefore
\begin{align*}
\frac{1}{h^d}&\int_{\T^d}\omega_{yx}\rho(y)\, dy\\
&=\left(\rho(x) - \nabla \rho(x)\cdot b(x)\eps + \frac{\sigma_\Phi }{2}\Delta \rho(x) h^2 + \O(\eps h + \eps^2)\right)(1 - \eps \div b(x)  + \O(\eps h + \eps^2))\\
&=\rho(x) - \nabla \rho(x)\cdot b(x)\eps + \frac{\sigma_\Phi }{2}\Delta \rho(x) h^2 - \rho(x)\div b(x)\eps + \O(\eps h + \eps^2).
\end{align*}
By Lemma~\ref{ApplyCoV}  we have
\begin{align*}
\frac{1}{h^d}&\int_{\T^d}\omega_{xy}\rho(y)\, dy\\
&=\frac{1}{h^d}\int_{\T^d}\Phi\left(\frac{\left|y-x-\varepsilon b(x)\right|}{h}\right) \rho(y)\, dy\\
&=\int_{B(0,2)}\Phi(|z|)\rho(x + hz + \eps b(x))\, dz\\
&=\int_{B(0,2)}\Phi(|z|)\left(\rho(x) +  \nabla \rho(x)\cdot (hz + \eps b(x)) + \frac{h^2}{2}z^T \nabla^2\rho(x)z + \O(\eps h+\eps^2)\right)\, dz\\
&=\rho(x) + \nabla \rho(x)\cdot b(x)\eps + \frac{\sigma_\Phi}{2}\Delta \rho(x) h^2 + \O(\eps h+\eps^2).
\end{align*}
Therefore
\begin{equation}\label{eq:first}
\frac{1}{h^d}\int_{\T^d} (\omega_{yx}-\omega_{xy})\rho(y)\, dy = -2\nabla \rho(x)\cdot b(x)\eps - \rho(x)\div b(x) \eps + \O(\eps h + \eps^2).
\end{equation}

By Lemma~\ref{ApplyCoV}  again we have
\begin{align}\label{eq:second}
&\frac{1}{h^d}\int_{\T^d}\omega_{yx}(y_i-x_i)\rho(y)\, dy\\
&=\frac{1}{h^d}\int_{\T^d}\Phi\left(\frac{\left|x-y-\varepsilon b(y)\right|}{h}\right) (y_i-x_i)\rho(y)\, dy\notag \\
&=-\int_{B(0,2)}\hspace{-3mm}\Phi(|z|)\rho(x - hz +\O(\eps))(z_i h + b_i(x)\eps + O(\eps h +\eps^2))\, dz\, (1 + \O(\eps))\notag \\
&=-\int_{B(0,2)}\hspace{-3mm}\Phi(|z|)\left(\rho(x) - \nabla \rho(x)\cdot z h +\O(\eps+h^2)\right)(z_i h + b_i(x)\eps + O(\eps h +\eps^2))\, dz\notag \\
&=-\int_{B(0,2)}\hspace{-3mm}\Phi(|z|)\left(\rho(x)z_i h + \rho(x)b_i(x)\eps - (\nabla \rho(x)\cdot z)z_i h^2 +\O(\eps h+h^3+\eps^2)\right)\, dz\notag \\
&=-\rho(x)b_i(x)\eps + \sigma_\Phi \rho_{x_i}(x) h^2 +\O(\eps h+h^3+\eps^2).\notag
\end{align}

Finally, another application of Lemma~\ref{ApplyCoV}  yields
\begin{align}\label{eq:third}
\frac{1}{h^d}&\int_{\T^d}\omega_{yx}(y_i-x_i)(y_j-x_j)\rho(y)\, dy\\
&=\frac{1}{h^d}\int_{\T^d}\Phi\left(\frac{\left|x-y-\varepsilon b(y)\right|}{h}\right) (y_i-x_i)(y_j-x_j)\rho(y)\, dy\notag \\
&=\int_{B(0,2)}\hspace{-3mm}\Phi(|z|)(\rho(x) +O(h+\eps))(z_iz_j h^2+ O(\eps h +\eps^2))\, dz\notag \\
&=\sigma_\Phi\rho(x)\delta_{ij} h^2 + \O(h^3 + \eps h + \eps^2),\notag
\end{align}
where $\delta_{ij}=1$ if $i=j$ and $\delta_{ij}=0$ otherwise. Combining \eqref{eq:first}, \eqref{eq:second}, and \eqref{eq:third} with \eqref{eq:nonlocal} we have
\begin{align}\label{eq:local}
\frac{d_x}{nh^d}\L_n \varphi(x) &=-(2\nabla \rho(x)\cdot b(x) + \rho(x)\div b(x) )\phi(x) \eps\\
&\hspace{0.75in}+ \nabla \varphi(x)\cdot (\sigma_\Phi \nabla \rho(x)h^2-\rho(x)b(x)\eps)+ \frac{\sigma_\Phi}{2} \rho(x)\Delta \varphi(x) h^2\notag \\
&\hspace{1.5in}+\O\left(\lambda(h + h^{-1}\eps)\beta + (\varepsilon^2 + h^3 + \eps h) \beta \right). \notag
\end{align}

We now divide both sides of  \eqref{eq:local} by $\rho(x)$ and use the identities 
\[\rho^{-2}\div(\rho^2b \phi) =\phi(\div b + 2\nabla \log \rho \cdot b) + \nabla \varphi \cdot b,\] 
and
\[\frac{1}{2}\rho^{-2} \div(\rho^2\nabla \varphi) = \frac{1}{2}\Delta \varphi + \nabla \log \rho \cdot \nabla \varphi.\]
to establish that the event that \eqref{eq:consistency} holds for all $\phi \in C^{2,1}(\T^d)$ and a fixed $x\in \T^d$ has probability at least $1-C\exp\left( -cnh^d\lambda^2 \right)$.

To establish that \eqref{eq:consistency} holds for all $x\in X_n$, we first condition on the random variable $x_1$. The remaining points $X_n\setminus \{x_1\}=\{x_2,x_3,\dots,x_{n-1}\}$ are an independent \emph{i.i.d.}~sample of size $n-1$, and we can write, as at the start of the proof, that
\[d_{x_1}\L_n \phi(x_1) = \sum_{y\in X_n\setminus \{x_1\}}(\omega_{yx_1}\phi(y) - \omega_{x_1y}\varphi(x_1)).\]
Since the sum on the right is over $n-1$ \emph{i.i.d.}~random variables, we can apply the same argument as above, and the law of conditional probability, to establish that the event that \eqref{eq:consistency} holds for $x=x_1$ and all $\phi\in C^{2,1}(\T^d)$ has probability at least $1-C\exp\left( -c(n-1)h^d\lambda^2 \right)$. We then repeat the argument, conditioning on each of $x_2,x_3,\dots,x_n$, and union bound over all $n$ points to obtain that \eqref{eq:consistency} holds for all $x\in X_n$ and $\phi \in C^{2,1}(\T^d)$ with probability at least $1-Cn\exp\left( -c(n-1)h^d\lambda^2 \right)$. The proof is completed by using that $n-1 \geq n/2$ for $n\geq 2$ to simplify the probability. 
\end{proof}

We also have a corresponding consistency result when the continuum PDE is first order.
\begin{theorem}[Consistency for the $1^{\rm st}$ order PageRank Operator]\label{FirstConsistencyResult}
 There exists constants $C, c >0$ such that for any  $0 < \lambda \leq 1$ the event that
\begin{equation}\label{eq:firstconsistency}
\frac{d_x}{\rho(x)nh^d}\L_n \varphi(x) =-\rho^{-2}\div(\rho^2b\phi)\eps+\O\left((\lambda h + \lambda h^{-1}\eps + \varepsilon^2 + h^2) \|\varphi\|_{C^{1,1}(\T^d)} \right)
\end{equation}
holds for all $\phi\in C^{1,1}(\T^d)$ and $x\in X_n$ has probability at least $1-Cn\exp(-cnh^d \lambda^2)$. 
\end{theorem}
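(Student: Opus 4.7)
The plan is to follow exactly the architecture of the proof of Theorem~\ref{ConsistencyResult}, keeping only what survives under the weaker regularity $\phi\in C^{1,1}(\T^d)$. Fix $x\in\T^d$, set $p=D\phi(x)$ and $\beta=\|\phi\|_{C^{1,1}(\T^d)}$, and Taylor expand $\phi(y)$ to first order at $x$. Since $|y-x|\le 2h+\eps\|b\|_\infty$ on the support of $\omega_{yx}$, the Taylor remainder is of size $\O((h+\eps)^2\beta)$, which yields
\begin{equation*}
d_x\L_n\phi(x)=\sum_{y\in X_n\setminus\{x\}}(\omega_{yx}-\omega_{xy})\phi(x)+\sum_{i=1}^d p_i\hspace{-2mm}\sum_{y\in X_n\setminus\{x\}}\hspace{-2mm}\omega_{yx}(y_i-x_i)+\O\Big((h^2+\eps^2)\beta\hspace{-2mm}\sum_y\omega_{yx}\Big).
\end{equation*}
As in Theorem~\ref{ConsistencyResult}, $\phi(x)$ and $p$ are scalars that factor out of the sums, so the probabilistic estimates that follow will be uniform over all test functions in $C^{1,1}(\T^d)$, with the error scaled by $\beta$.

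Next I apply Lemma~\ref{ConMeas} verbatim to the three remaining sums. Using $|\omega_{yx}-\omega_{xy}|\le Ch^{-1}\eps$ for the asymmetric sum and $|y-x|\le C(h+\eps)$ for the first-moment sum, this yields, on an event of probability at least $1-C\exp(-cnh^d\lambda^2)$, the same three expectation approximations used in the proof of Theorem~\ref{ConsistencyResult}, with sampling errors $\O(\lambda h^{-1}\eps)$ and $\O(\lambda(\eps+h))$. I then invoke Lemma~\ref{ApplyCoV} and repeat the change-of-variables asymptotic expansions from Theorem~\ref{ConsistencyResult}, but now truncate each expansion at the $\O(\eps)$ order and absorb every $\O(h^2,\eps h,\eps^2)$ contribution into the remainder. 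The two relevant identities become
\begin{equation*}
\tfrac{1}{h^d}\int_{\T^d}(\omega_{yx}-\omega_{xy})\rho(y)\,dy=-\eps\bigl(2\nabla\rho(x)\cdot b(x)+\rho(x)\div b(x)\bigr)+\O(h^2+\eps h+\eps^2),
\end{equation*}
\begin{equation*}
\tfrac{1}{h^d}\int_{\T^d}\omega_{yx}(y_i-x_i)\rho(y)\,dy=-\rho(x)b_i(x)\eps+\O(h^2+\eps h+\eps^2).
\end{equation*}

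Combining these approximations and dividing by $\rho(x)$ gives
\begin{equation*}
\tfrac{d_x}{\rho(x)nh^d}\L_n\phi(x)=-\eps\Bigl(2\tfrac{\nabla\rho\cdot b}{\rho}+\div b\Bigr)\phi(x)-\eps\,\nabla\phi(x)\cdot b(x)+\O\bigl((\lambda h+\lambda h^{-1}\eps+\eps^2+h^2)\beta\bigr),
\end{equation*}
which reduces to the stated form via the identity $\rho^{-2}\div(\rho^2 b\phi)=\phi(\div b+2\nabla\log\rho\cdot b)+\nabla\phi\cdot b$ already used in Theorem~\ref{ConsistencyResult}. To upgrade from a single $x\in\T^d$ to the event holding simultaneously for every $x\in X_n$, I reuse the conditioning argument from the end of Theorem~\ref{ConsistencyResult}: condition on each $x_i$ in turn, apply the above to the remaining $n-1$ i.i.d.\ points, and union-bound over $i$, absorbing $n-1\ge n/2$ into the constant. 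There is no serious obstacle here; the argument is essentially a truncation of Theorem~\ref{ConsistencyResult}, and the only point needing care is verifying that the first-order Taylor remainder is controlled purely by $\|\phi\|_{C^{1,1}}$ (i.e., by the Lipschitz constant of $D\phi$), so that the probabilistic bound is genuinely uniform over the class of $C^{1,1}$ test functions.
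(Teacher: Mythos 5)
Your proposal is correct and follows essentially the same route as the paper: a first-order Taylor expansion with the $C^{1,1}$ remainder, Lemma~\ref{ConMeas} applied to the three surviving sums, the expansions \eqref{eq:first} and \eqref{eq:second} truncated so that the $\sigma_\Phi\rho_{x_i}h^2$ term is absorbed into the $\O(h^2)\beta$ error, and the same conditioning/union-bound step to pass from a fixed $x$ to all of $X_n$. No gaps.
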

\begin{proof}
The proof follows closely to that of Theorem~\ref{ConsistencyResult}, so we sketch it here.
Fix $x\in \T^d$, take $\varphi \in C^{1,1}(\mathbb{R}^d)$ to be a test function, and let $p = D\varphi(x)$. We have
\begin{align*}
d_x\L_n \varphi(x) &= \sum_{y\in X_n} \omega_{yx} \varphi(y) - d_x\phi(x) \nonumber\\
&= \sum_{y\in X_n}(\omega_{yx} - \omega_{xy})\phi(x) +\sum_{i=1}^d p_i\sum_{y\in X_n}\omega_{yx}(y_i-x_i) +\O\left((\varepsilon^2 + h^2) \beta\sum_y \omega_{yx} \right),
\end{align*}
where $\beta=\|\phi\|_{C^{1,1}(\T^d)}$. Thus, with probability at least $1-C\exp\left( -cnh^d\lambda^2 \right)$ we have that
\begin{align}\label{eq:nonlocal_first}
\frac{d_x}{nh^d}\L_n \varphi(x) &=\frac{1}{h^d}\int_{\T^d} (\omega_{yx}-\omega_{xy})\rho(y)\, dy \phi(x) + \sum_{i=1}^d\frac{p_i}{h^d}\int_{\T^d}\omega_{yx}(y_i-x_i)\rho(y) \, dy\\
&\hspace{2in}+\O\left(\lambda(h + h^{-1}\eps)\beta + (\varepsilon^2 + h^2) \beta \right). \notag
\end{align}
By \eqref{eq:first} and \eqref{eq:second} we have 
\begin{align*}
\frac{d_x}{nh^d}\L_n \varphi(x) &=-(2\nabla \rho(x)\cdot b(x) + \rho(x)\div b(x) )\phi(x) \eps- \nabla \varphi(x)\cdot \rho(x)b(x)\eps\notag \\
&\hspace{2in}+\O\left(\lambda(h + h^{-1}\eps)\beta + (\varepsilon^2 + h^2) \beta \right). \notag
\end{align*}
Divide both sides by $\rho(x)$ and use the identity
\[\rho^{-2}\div(\rho^2b \phi) =\phi(\div b + 2\nabla \log \rho \cdot b) + \nabla \varphi \cdot b\] 
to complete the proof.
\end{proof}

\section{Convergence Proofs}\label{ConvergenceProofs} 

We now prove our main results. We first need a stability estimate for the PageRank problem \eqref{PRScheme}.
\begin{lemma}[$\ell_\infty$ Stability for the PageRank Operator]\label{Stability}
Assume that $\gamma_\eps,\gamma_h\leq 1$ and $\eta<1$, where $\eta$ is defined in \eqref{AssumptionforRate} and $\gamma_\eps,\gamma_h$ in \eqref{eq:Gammas}. There exists $C,K,c>0$ such that with probability at least $1-Cn\exp\left( -cnh^{d+2}(1-\eta\gamma_\eps)^2 \right)$,  if $\eps+h\leq K(1-\eta \gamma_\eps)$ and $u,v:X_n\to \R$ satisfy
\begin{equation}\label{eq:PRstab}
u(x) - \gamma \L_n u(x) = \frac{nh^d}{d_x}v(x) \ \ \ \text{ for all }x\in X_n
\end{equation}
with $\gamma = (1-\alpha)/\alpha$, then it holds that
\begin{equation}\label{eq:stabilityestimate}
\max_{x\in X_n}|u(x)|\leq 2(1-\eta\gamma_\eps)^{-1}\max_{x\in X_n}|\rho(x)^{-1}v(x)|.
\end{equation}
\end{lemma}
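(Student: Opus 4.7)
The plan is to establish this bound via a discrete maximum principle, with the essential input being a uniform pointwise estimate on the ratio
\[
R(x):=\frac{1}{d_x}\sum_{y\in X_n}\omega_n(y,x).
\]
Note that $R(x)=1+\L_n\varphi(x)$ for the constant test function $\varphi\equiv 1$, so Theorem~\ref{ConsistencyResult} applies directly. The crux is to show that $\gamma\bigl(1-R(x)\bigr)$ is bounded in absolute value by $\eta\gamma_\eps$ plus an error that can be forced strictly below $(1-\eta\gamma_\eps)/2$ under the stated hypotheses; this gap is what produces the stability constant $(1-\eta\gamma_\eps)^{-1}$.

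For the first step, I would feed $\varphi\equiv 1$ into Theorem~\ref{ConsistencyResult} (the diffusion term vanishes since $\nabla\varphi=0$ and $\Delta\varphi=0$) and combine with Lemma~\ref{ConMeas1} to replace $d_x/(nh^d\rho(x))$ by $1+\mathcal{O}(\lambda+\eps+h^2)$. This yields, with probability at least $1-Cn\exp(-cnh^d\lambda^2)$,
\[
R(x)-1=-\eps\,\rho^{-2}\div(\rho^2 b)(x)+\mathcal{O}\bigl(\lambda h+\lambda h^{-1}\eps+\eps^2+h^3+\eps h\bigr)
\]
uniformly in $x\in X_n$. Multiplying through by $\gamma$ and using $\gamma\eps=\gamma_\eps\leq 1$ and $\gamma h^2=\gamma_h\leq 1$, the residual becomes $\mathcal{O}(\lambda\gamma_h/h+\lambda\gamma_\eps/h^2+\eps+h)$: its deterministic part is absorbed by the hypothesis $\eps+h\leq K(1-\eta\gamma_\eps)$, and its $\lambda$-dependent part by the choice $\lambda\asymp(1-\eta\gamma_\eps)h$ with a sufficiently small constant. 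This specific scaling also converts the generic $nh^d\lambda^2$ exponent from the consistency theorem into the $nh^{d+2}(1-\eta\gamma_\eps)^2$ exponent appearing in the statement of the lemma.

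The second step is the maximum-principle argument. Let $x_0\in X_n$ be a maximizer of $u$. Since $\omega_n(y,x_0)\geq 0$ and $u(y)\leq u(x_0)$ for all $y$,
\[
\frac{1}{d_{x_0}}\sum_{y\in X_n}\omega_n(y,x_0)u(y)\leq R(x_0)u(x_0),
\]
so rewriting \eqref{eq:PRstab} in the form $(1+\gamma)u(x_0)-\gamma d_{x_0}^{-1}\sum_y\omega_n(y,x_0)u(y)=nh^d d_{x_0}^{-1}v(x_0)$ produces
\[
\bigl(1+\gamma(1-R(x_0))\bigr)u(x_0)\leq \frac{nh^d}{d_{x_0}}v(x_0).
\]
By the first step the left-hand coefficient is at least $(1-\eta\gamma_\eps)/2$; by Lemma~\ref{ConMeas1} (tightening $\lambda$ if necessary) the quotient $nh^d/d_{x_0}$ is close to $\rho(x_0)^{-1}$; a careful accounting of the constants yields $u(x_0)\leq 2(1-\eta\gamma_\eps)^{-1}\max_{X_n}|\rho^{-1}v|$. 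Applying the identical argument to $-u$, which solves \eqref{eq:PRstab} with $v$ replaced by $-v$, gives the matching lower bound on $\min_{X_n}u$ and hence \eqref{eq:stabilityestimate}.

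The main obstacle will be the balancing act in the first step: the residual from Theorem~\ref{ConsistencyResult} must be driven well below $1-\eta\gamma_\eps$ even after being amplified by the possibly large factor $\gamma$, while at the same time $nh^d\lambda^2$ must blow up so that concentration is useful. The scaling $\lambda\asymp(1-\eta\gamma_\eps)h$ is the unique choice (up to constants) that meets both demands simultaneously, and it is precisely this balance that forces the hypothesis $\eps+h\leq K(1-\eta\gamma_\eps)$ and produces the exceptional probability $Cn\exp(-cnh^{d+2}(1-\eta\gamma_\eps)^2)$ stated in the lemma.
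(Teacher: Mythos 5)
Your proposal is correct and follows essentially the same route as the paper: a discrete maximum principle at a maximizer of $u$, pointwise consistency (Theorem~\ref{ConsistencyResult}) applied to the constant test function $\varphi\equiv 1$ together with Lemma~\ref{ConMeas1} to control the coefficient $1-\gamma\L_n\varphi$, the scaling $\lambda\asymp(1-\eta\gamma_\eps)h$ to produce the stated probability exponent, and negation of $u$ for the reverse bound. (The only blemish is a bookkeeping slip: $\gamma\cdot\lambda h^{-1}\eps=\gamma_\eps\lambda/h$, not $\gamma_\eps\lambda/h^2$, but your chosen scaling of $\lambda$ still absorbs this term exactly as in the paper.)
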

\begin{proof}
We use a maximum principle argument. Let $x_0\in X_n$ be a point where $u$ attains its maximum value. Setting $\phi \equiv 1$ we have 
\[\L_nu(x_0) = \frac{1}{d_{x_0}}\sum_{y\in X_n}\omega_{yx_0}u(y) - u(x_0) \leq \frac{1}{d_{x_0}}\sum_{y\in X_n}\omega_{yx_0}u(x_0)  - u(x_0) =u(x_0) \L_n \varphi(x_0).\]
It follows that
\begin{equation}\label{eq:bound1}
\frac{d_{x_0}}{nh^d}(1 - \gamma \L_n\varphi(x_0)) u(x_0) \leq v(x_0).
\end{equation}
By Theorem~\ref{ConsistencyResult} we have
\[\frac{d_{x_0}}{\rho(x)nh^d}\L_n\varphi(x_0) =-\rho^{-2}\div(\rho^2b)\eps +\O(\lambda h + \lambda h^{-1}\eps + \eps^2 + h^3 + \eps h),\]
with probability at least $1-Cn\exp(-cnh^d \lambda^2)$. In particular, observe that $\textrm{div}(\rho^2\nabla\varphi)=0$ in Theorem \ref{ConsistencyResult}, since $\varphi\equiv 1$ is constant. Setting $\lambda=\delta h$ for $0 < \delta \leq h^{-1}$ and recalling \eqref{AssumptionforRate} we have
\[\frac{d_{x_0}}{\rho(x)nh^d}\gamma|\L_n\varphi(x)| \leq \eta\gamma_\eps + C(\gamma_h + \gamma_\eps)(\delta + h + \eps) \leq \eta\gamma_\eps + C(\delta + h + \eps),\]
with probability at least $1-Cn\exp(-cnh^{d+2} \delta^2)$. By Lemma~\ref{ConMeas1} we have
\[\frac{d_{x_0}}{\rho(x)nh^d} = 1 + \O(\eps + h)\]
 with probability at least $1-2n\exp(-cnh^{d+2})$. Inserting these observations into \eqref{eq:bound1} we have
\[\rho(x_0)(1 - \eta\gamma_\eps- C(\delta + h + \eps))u(x_0)\leq v(x_0),\]
for a constant $C>0$. Hence, selecting $\delta = (1-\eta \gamma_\eps)/(4C)$ and restricting $h+\eps \leq (1-\eta \gamma_\eps)/(4C)$, we have 
\[\frac{1}{2}\rho(x_0)(1 - \eta\gamma_\eps)u(x_0)\leq v(x_0),\]
with probability at least $1-Cn\exp(-cnh^{d+2}(1-\eta\gamma_\eps)^2)$. Therefore
\[\max_{x\in X_n}u(x) \leq 2(1-\eta\gamma_\eps)^{-1}\max_{x\in X_n}|\rho(x)^{-1}v(x)|\]
holds with probability at least $1-Cn\exp(-cnh^{d+2}(1-\eta\gamma_\eps)^2)$ provided $\eps+h\leq K(1-\eta\gamma_\eps)$. \\

For the proof of the other direction, we set $\bar{u}(x)=-u(x)$, and note that $\bar{u}$ satisfies
\[\bar{u}(x) - \gamma \L_n \bar{u}(x) = -\frac{nh^d}{d_x}v(x) \ \ \ \text{ for all } x\in X_n.\]
The argument in the first part of the proof yields
\[\max_{x\in X_n}\bar{u}(x) \leq 2(1-\eta\gamma_\eps)^{-1}\max_{x\in X_n}|\rho(x)^{-1}v(x)|,\]
which completes the proof.
\end{proof}

Given the stability estimate from Lemma~\ref{Stability}, we can now prove Theorem~\ref{Rate}.
\begin{proof}[Proof of Theorem~\ref{Rate}]
Given the assumptions on $\rho$, $b$, and $v$, the  solution $u$  of the continuum PDE \eqref{PRPDE} belongs to $C^3(\T^d)$, and $\|u\|_{C^3(\T^d)}$ depends on the ellipticity constant of the equation $\sigma_\Phi \gamma_h$ \cite{Gilbarg,Evans}. Thus, applying Lemma~\ref{ConMeas1}, and Theorem~\ref{ConsistencyResult} with $\lambda=\delta h$ yields
\begin{align*}
\frac{d_x}{\rho(x)nh^d}u(x)& - \frac{d_x}{\rho(x)nh^d}\gamma \L_nu(x)\\
&=u(x) +\gamma_\eps\rho^{-2}\div(\rho^2bu)- \frac{1}{2}\sigma_\Phi\gamma_h\rho^{-2} \div(\rho^2\nabla u) +\O\left(\delta  + \varepsilon + h \right)\\
&= \frac{v(x)}{\rho(x)}+\O\left(\delta  + \varepsilon + h \right)
\end{align*}
for all $x\in X_n$ with probability at least $1-Cn\exp\left( -cnh^{d+2}\delta^2 \right)$ for any $0 < \delta \leq h^{-1}$, where we used that $\gamma_\eps,\gamma_h\leq 1$. Therefore
\[u(x) - \gamma \L_n u(x) = \frac{nh^d}{d_x}\left(v(x) + \O(\delta + \eps + h)\right)\]
for all $x\in X_n$ with probability at least $1-Cn\exp\left( -cnh^{d+2}\delta^2) \right)$.

Consider now the difference $w_n(x) = u(x) - u_n(x)$, where $u_n$ solves the PageRank problem \eqref{PRScheme}. Then $w_n$ satisfies
\[w_n(x) - \gamma \L_n w_n(x) = \frac{nh^d}{d_x}\O(\delta + \eps + h)\]
for all $x\in X_n$ with probability at least $1-Cn\exp\left( -cnh^{d+2}\delta^2 \right)$. Applying Lemma~\ref{Stability} completes the proof.
\end{proof}

We now give the proof of Theorem~\ref{TimeRate}.
\begin{proof}[Proof of Theorem~\ref{TimeRate}]
The proof is similar to the proof of Theorem~\ref{Rate}, so we sketch the outline, omitting some details.

First, note that
\[\frac{u(x,\alpha k+\alpha)-u(x,\alpha k)}{\alpha} =u_t(x,\alpha k) + \mathcal{O}(\alpha).\]
Proceeding now as in the proof of Theorem~\ref{Rate}, we use Theorem~\ref{ConsistencyResult}, Lemma~\ref{ConMeas1}, and the observation above to deduce that
\begin{equation}\label{eq:PRdiscrete}
\frac{u(x,\alpha k+\alpha)-u(x,\alpha k)}{\alpha} +u(x,\alpha k) - \gamma\L_n u(x,\alpha k) = \frac{nh^d}{d_n(x)}v(x) + O(\lambda+\eps+h),
\end{equation}
with probability at least $1-Cn\exp(-cnh^{d+2}\lambda^2)$ for $0 < \lambda \leq 1$, where we also used that $\gamma_\eps\leq 1$. Let $\phi(x)=1$ for all $x\in X_n$. As in the proof of Lemma~\ref{Stability} we have that
\begin{equation}\label{eq:testfun}
(1-\alpha) |\L_n \varphi(x)| \leq \alpha\eta\gamma_\eps + C\alpha(\delta + h + \eps)
\end{equation}
with probability at least $1-Cn\exp(-cnh^{d+2}\delta^2)$, where $0 < \delta \leq 1$ will be chosen later. For the rest of the proof we assume the events above hold true.

Define $w_n(x,k) = u(x,\alpha k) - u_n(x,k)$. We claim that
\begin{equation}\label{eq:wnbound}
w_n(x,k) \leq Ck\alpha(\lambda + \eps + h)=:M_k.
\end{equation}
The proof of the other direction is similar, and this will complete the proof. To prove \eqref{eq:wnbound}, we use a comparision principle argument that proceeds by induction. The base case $k=0$ is trivial, since $w_n(x,0)=0$. Assume that \eqref{eq:wnbound} is true for some $k\geq 0$. Then subtracting \eqref{eq:PRdiscrete} and \eqref{eq:PRevolution} we find that $w_n$ satisfies 
\[\frac{w_n(x,k+1)-w_n(x,k)}{\alpha} +w_n(x,k) - \gamma\L_n w_n(x,k) \leq C(\lambda+\eps+h)\]
for all $x\in X_n$ and $k\geq 0$. Rearranging this we obtain
\begin{align*}
w_n(x,k+1) &\leq (1-\alpha)\left(w_n(x,k) + \L_n w_n(x,k)\right) +C\alpha(\lambda+\eps+h)\\
&=(1-\alpha)\frac{1}{d_x}\sum_{y\in X_n}w_{yx}w_n(y,k) + C\alpha(\lambda+\eps+h)\\
&\leq (1-\alpha)\frac{M_k}{d_x}\sum_{y\in X_n}w_{yx} + C\alpha(\lambda+\eps+h)\\
&= M_k(1-\alpha)\left(1 + \L_n \phi(x)\right)+ C\alpha(\lambda+\eps+h),
\end{align*}
since $w_n(x,k)\leq M_k$ for all $x\in X_n$.
Applying \eqref{eq:testfun} we have
\begin{align*}
w_n(x,k+1) &\leq M_k\left(1-(1-\eta\gamma_\eps)\alpha+ C\alpha(\delta+h+\eps)\right)+ C\alpha(\lambda+\eps+h).
\end{align*}
Hence, choosing $\delta=(1-\eta\gamma_\eps)/(4C)$ and restricting $h+\eps \leq (1-\eta\gamma_\eps)/(4C)$, we have
\begin{align*}
w_n(x,k+1) &\leq M_k + C\alpha(\lambda+\eps+h) = M_{k+1}.
\end{align*}
The claim \eqref{eq:wnbound} is thus established by induction, and this completes the proof.
\end{proof}

We now turn our attention to proving the first-order rate, which hinges on the following observation that our scheme is \textit{monotone}. 
\begin{proposition}[Monotonicity]\label{isMonotone}
Let $u,v:X_n\to \R$ and $x_0\in X_n$ such that $u(x_0)=v(x_0)$ and $u\leq v$. Then $\L_n u(x_0) \leq \L_n v(x_0)$.
\end{proposition}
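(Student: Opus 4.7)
The proof is essentially immediate from the structure of the operator, so the plan is very short. Recall that
\[
\mathcal{L}_n u(x_0) = \frac{1}{d_n(x_0)}\sum_{y\in X_n}\omega_n(y,x_0)\,u(y) - u(x_0),
\]
so the statement reduces to showing that the weighted averaging piece is monotone in the argument function, which is inherited directly from the nonnegativity of the weights.

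The plan is as follows. First, I would note that $\omega_n(y,x_0)\geq 0$ for every $y\in X_n$, since $\Phi\geq 0$ by assumption, and that $d_n(x_0) = \sum_{y\in X_n}\omega_n(y,x_0) > 0$ (this is positive because $\Phi(0)>0$ and $\Phi$ is continuous, so at worst one picks up $y=x_0$ and nearby sample points in the support of the kernel). Then, from $u(y)\leq v(y)$ for all $y\in X_n$ and the nonnegativity of the weights, I obtain the termwise inequality $\omega_n(y,x_0)u(y)\leq \omega_n(y,x_0)v(y)$, which sums to
\[
\frac{1}{d_n(x_0)}\sum_{y\in X_n}\omega_n(y,x_0)u(y) \;\leq\; \frac{1}{d_n(x_0)}\sum_{y\in X_n}\omega_n(y,x_0)v(y).
\]

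Finally, since $u(x_0)=v(x_0)$, subtracting $u(x_0)$ from the left side and $v(x_0)$ from the right side preserves the inequality, giving $\mathcal{L}_n u(x_0)\leq \mathcal{L}_n v(x_0)$. There is no real obstacle here; the only subtlety worth pointing out is that the hypothesis $u(x_0)=v(x_0)$ is used only to cancel the $-u(x_0)$ term, and the nonnegativity of the off-diagonal weights is what makes the scheme monotone in the sense standard for finite-difference approximations of elliptic/parabolic operators (cf.\ the Barles--Souganidis framework). This monotonicity is exactly the ingredient needed for the comparison-principle argument used later to obtain the first-order rate.
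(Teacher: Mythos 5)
Your proof is correct and is essentially the same one-line argument the paper gives: nonnegativity of the weights $\omega_n(y,x_0)$ yields the termwise inequality, the positive degree normalizes it, and $u(x_0)=v(x_0)$ handles the subtracted term. You have merely spelled out the details that the paper's proof leaves implicit.
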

\begin{proof}
The proof is immediate, since
\[\L_n u(x_0) = \frac{1}{d_x}\sum_{y\in X_n}\omega_{yx}u(y) - u(x_0) \leq\frac{1}{d_x}\sum_{y\in X_n}\omega_{yx}v(y) - v(x_0) = \L_n v(x_0).\]
\end{proof} 

In order to prove a convergence rate for the first order continuum limit, we require a Lipschitz estimate on the viscosity solution $u$ of \eqref{PROp1stOrder}. The result follows a standard maximum principle argument, which we include for completeness.
\begin{lemma}\label{lem:Lip}
Let $\rho\in C^{1,1}(\T^d)$, $b\in C^{1,1}(\T^d;\R^d)$, and $v\in C^{0,1}(\T^d)$. Assume that $\gamma_\eps \leq 1$, and $\eta < 1$. Let $u\in C(\T^d)$ be the viscosity solution of the PDE \eqref{PROp1stOrder}. If $\|Db\|_{L^\infty(\T^d)}\leq \tfrac{1}{2}(1-\eta\gamma_\eps)$ then $u\in C^{0,1}(\T^d)$ and $\|u\|_{C^{0,1}(\T^d)}$ depends only on $1-\eta\gamma_\eps$, $\|\rho\|_{C^{0,1}}$, $\|b\|_{C^{1,1}}$, and $\|v\|_{C^{0,1}}$.
\end{lemma}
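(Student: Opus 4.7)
The plan is to combine an $L^\infty$ bound for $u$ with the classical doubling-of-variables argument from viscosity solution theory, exploiting the smallness condition $\|Db\|_{L^\infty}\leq \tfrac12(1-\eta\gamma_\eps)$ to absorb the ``bad'' term produced by the transport coefficient $b$. Throughout, I would rewrite the PDE \eqref{PROp1stOrder} as
\[F(x,u,\nabla u) := u\bigl(1+\gamma_\eps c(x)\bigr) + \gamma_\eps\, b(x)\cdot\nabla u - \rho^{-1}v = 0,\]
where $c(x):=\rho^{-2}\div(\rho^2 b)(x)$ satisfies $\|c\|_{L^\infty}=\eta$, and where the regularity assumptions on $\rho$ and $b$ ensure $c\in C^{0,1}(\T^d)$ with $\|c\|_{C^{0,1}}$ controlled by $\|\rho\|_{C^{0,1}}$ and $\|b\|_{C^{1,1}}$.

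First I would establish the a priori bound $\|u\|_{L^\infty}\leq M:=(1-\eta\gamma_\eps)^{-1}\|\rho^{-1}v\|_{L^\infty}$. At a maximum point $x^{\star}$ of $u$, the viscosity subsolution inequality with the constant test function gives $u(x^{\star})(1+\gamma_\eps c(x^{\star}))\leq \rho(x^{\star})^{-1}v(x^{\star})$, and since $1+\gamma_\eps c(x^{\star})\geq 1-\eta\gamma_\eps>0$ this yields the upper bound; the lower bound is analogous. Compactness of $\T^d$ makes the argument rigorous without extra localization.

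The Lipschitz bound will follow from showing that, for some constant $L>0$ depending only on the claimed data, the function
\[\Psi(x,y) := u(x)-u(y)-L\,d_{\T^d}(x,y)\]
satisfies $\max_{\T^d\times\T^d}\Psi\leq 0$ (here $d_{\T^d}$ is the geodesic distance on the torus, which is smooth away from the diagonal and the cut locus). I would argue by contradiction: if $\Psi$ attained a positive maximum at $(x_0,y_0)$ then $x_0\neq y_0$, so the doubling lemma gives that $q:=L\,\nabla_x d_{\T^d}(x_0,y_0)$ is admissible as a subgradient of $u$ at $x_0$ and as a supergradient of $u$ at $y_0$, with $|q|=L$. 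The viscosity sub/super inequalities at these points yield
\begin{align*}
u(x_0)(1+\gamma_\eps c(x_0)) + \gamma_\eps b(x_0)\cdot q &\leq \rho(x_0)^{-1}v(x_0),\\
u(y_0)(1+\gamma_\eps c(y_0)) + \gamma_\eps b(y_0)\cdot q &\geq \rho(y_0)^{-1}v(y_0).
\end{align*}
Subtracting, decomposing $u(x_0)c(x_0)-u(y_0)c(y_0) = (u(x_0)-u(y_0))c(x_0) + u(y_0)(c(x_0)-c(y_0))$, and using $u(x_0)-u(y_0)\geq L\,d_{\T^d}(x_0,y_0)$ together with the lower bound $1+\gamma_\eps c(x_0)\geq 1-\eta\gamma_\eps$ leads to
\[(1-\eta\gamma_\eps)L\, d_{\T^d}(x_0,y_0) \leq \gamma_\eps \|Db\|_{L^\infty}L\, d_{\T^d}(x_0,y_0) + \bigl(\gamma_\eps M\|c\|_{C^{0,1}} + \|\rho^{-1}v\|_{C^{0,1}}\bigr)d_{\T^d}(x_0,y_0).\]

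Dividing by $d_{\T^d}(x_0,y_0)>0$, the hypothesis $\|Db\|_{L^\infty}\leq \tfrac12(1-\eta\gamma_\eps)$ together with $\gamma_\eps\leq 1$ shows that the $L$-term on the right is absorbed by a factor $\tfrac12$ into the left side, yielding
\[\tfrac12(1-\eta\gamma_\eps)L \leq \gamma_\eps M\|c\|_{C^{0,1}} + \|\rho^{-1}v\|_{C^{0,1}}.\]
Choosing $L$ strictly larger than $2(1-\eta\gamma_\eps)^{-1}$ times the right-hand constant produces a contradiction, so $\Psi\leq 0$ and $|u(x)-u(y)|\leq L\,d_{\T^d}(x,y)$ follows. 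The Lipschitz constant depends exactly on the quantities listed in the statement.

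The main technical obstacle is the standard one in first-order viscosity estimates: justifying that the penalty $L\,d_{\T^d}(\cdot,\cdot)$ can be used as a test function even though $d_{\T^d}$ is only Lipschitz at the diagonal. This is handled either by replacing $L\,d_{\T^d}(x,y)$ with the smooth penalization $\tfrac{L^2}{2\delta}|x-y|_{\T^d}^2+\delta$ and sending $\delta\to 0$, or by noting that at a maximum point with $x_0\neq y_0$ the distance function is smooth locally and standard sub/super differential calculus applies. A minor additional point is taking the supremum over appropriate lifts to $\R^d$ to make sense of ``$|x-y|$'' on the torus, which is routine.
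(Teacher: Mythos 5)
Your proposal is correct, but it takes a genuinely different route from the paper. You work directly at the level of the viscosity solution, running a doubling-of-variables argument with a Lipschitz penalty $L\,d_{\T^d}(x,y)$ and absorbing the transport term $\gamma_\eps(b(x_0)-b(y_0))\cdot q$, of size at most $\gamma_\eps\|Db\|_{L^\infty}L\,|x_0-y_0|$, into the zeroth-order coefficient using $\|Db\|_{L^\infty}\leq\tfrac12(1-\eta\gamma_\eps)$. The paper instead regularizes: it adds $-\delta\Delta u_\delta$ to \eqref{PROp1stOrder}, obtains a classical $C^{2,\nu}$ solution, and runs a Bernstein-type gradient estimate (differentiate the equation, multiply by $u_{\delta,x_i}$, sum, and evaluate at the maximum of $|\nabla u_\delta|^2$, where the quadratic term $\nabla u_\delta^T Db\,\nabla u_\delta$ is absorbed by exactly the same smallness hypothesis), then passes to the limit $\delta\to 0$ using uniform convergence $u_\delta\to u$. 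The two arguments use the structural condition on $\|Db\|_{L^\infty}$ in precisely analogous ways, so neither is stronger; yours avoids invoking elliptic existence theory and the vanishing-viscosity convergence, at the cost of the standard technical care you flag concerning the non-smoothness of the penalty at the cut locus (resolved by the quadratic penalization $\tfrac{L^2}{2\delta}|x-y|^2+\delta$ you mention, or by the $\tfrac{\theta}{2}|x-y|^2$ device the paper itself uses later in the proof of Theorem~\ref{1stRate}). One small shared caveat: the Lipschitz norm of $c=\div b+2b\cdot\nabla\log\rho$ involves second derivatives of $\rho$, so the constant really depends on $\|\rho\|_{C^{1,1}}$ rather than $\|\rho\|_{C^{0,1}}$ as stated; this affects the paper's proof equally and is not a defect of your argument.
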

\begin{proof}
Let $\delta>0$ and consider the viscosity regularized version of \eqref{PROp1stOrder}
\begin{equation}\label{eq:reg}
u_\delta +\gamma_\eps\rho^{-2}\div(\rho^2bu_\delta)  - \delta \Delta u_\delta = \rho^{-1}v \ \ \ \textrm{ on } \mathbb{T}^d.
\end{equation}
By standard elliptic PDE theory \cite{Gilbarg}, \eqref{eq:reg} has a unique solution $u_\delta\in C^{2,\nu} (\T^d)$. It is a standard result in viscosity solution theory (see, \eg, \cite{Crandall,calder2018lecture}) that $u_\delta\to u$ uniformly as $\delta\to 0$, provided $\eta \gamma_\eps < 1$. 

We now prove that the Lipschitz constant of $u_\delta$ is controlled independently of $\delta>0$. The argument is standard in elliptic PDEs, and we include it for completeness. We write $c(x) = 1 + \rho^{-2}\div(\rho^2b)$ for convenience, and note we have $c(x) \geq 1-\eta \gamma_\eps$. By the maximum principle we have
\begin{equation}\label{eq:zero}
\|u_\delta\|_{L^\infty(\T^d)}\leq (1-\eta\gamma_\eps)^{-1}\|\rho^{-1}v\|_{L^\infty(\T^d)}.
\end{equation}
To bound the gradient, we differentiate both sides of \eqref{eq:reg} in $x_i$, then multiply both sides by $u_{x_i}$, and sum over $i$ to obtain
\[c|\nabla u_\delta|^2 + u_\delta\nabla u_\delta\cdot \nabla c + \nabla u_\delta^T Db \nabla u_\delta + b\cdot \nabla |\nabla u_\delta|^2 - \sum_{i=1}^d u_{\delta,x_i}\Delta u_{\delta,x_i} = \nabla u_\delta\cdot \nabla (\rho^{-1}v).\]
We use the identity
\[w\Delta w = \frac{1}{2}\Delta w^2 - |\nabla w|^2 \leq \frac{1}{2}\Delta w^2\]
with $w=u_{\delta,x_i}$ to obtain
\[c|\nabla u_\delta|^2 + u_\delta\nabla u_\delta\cdot \nabla c + \nabla u_\delta^T Db \nabla u_\delta + b\cdot \nabla |\nabla u_\delta|^2 -\frac{\delta}{2}\Delta |\nabla u_\delta|^2 \leq \nabla u_\delta\cdot \nabla (\rho^{-1}v)\]
on the torus $\T^d$. Now, let $x_0\in \T^d$ be a point where $|\nabla u_\delta|^2$ attains its maximum value on $\T^d$. Then we have $\nabla |\nabla u_\delta(x_0)|^2 = 0$ and $\Delta |\nabla u_\delta(x_0)|^2\leq 0$, which yields
\begin{align*}
(1-\eta\gamma_\eps) |\nabla u_\delta(x_0)|^2&\leq c(x_0)|\nabla u_\delta(x_0)|^2\\
&\leq  - u_\delta\nabla u_\delta\cdot \nabla c - \nabla u_\delta^T Db \nabla u_\delta + \nabla u_\delta\cdot \nabla (\rho^{-1}v)\vert_{x_0}\\
&\leq C(\|u_{\delta}\|_{L^\infty(\T^d)}+1)|\nabla u_\delta(x_0)| + \|Db\|_{L^\infty(\T^d)}|\nabla u(x_0)|^2,
\end{align*}
where $C$ depends on $\rho, c$, and $v$. Since $\|Db\|_{L^\infty(\T^d)}\leq (1 - \eta\gamma_\eps)/2$, we can apply Cauchy's inequality with $\eps$ to obtain
\[|\nabla u_\delta(x_0)|^2 \leq C(1-\eta\gamma_\eps)^{-2}(\|u_{\delta}\|_{L^\infty(\T^d)}+1)^2,\]
which completes the proof.
\end{proof}

We now give the proof of Theorem~\ref{1stRate}. The proof follows the method of doubling the variables, which is used in viscosity solution theory for proving the comparison principle and establishing error estimates \cite{Crandall,calder2018lecture}. For the reader's convenience, we review the definition of viscosity solution in Section~\ref{sec:viscosity}.
\begin{proof}[Proof of Theorem~\ref{1stRate}]
First, by Lemma~\ref{Stability} we have that $u_n$ is uniformly bounded, independent  of $n$, with probability at least $1 - Cn \exp\left( -cnh^{d+2}(1-\eta\gamma_\eps)^2 \right)$ provided $\eps+h$ is sufficiently small. We assume these conditions throughout the rest of the proof.

Define the doubling-of-variables function 
\begin{align*}
\Phi(x,y) := u_n(x) -u(y) - \frac{\theta}{2}|x-y|^2 \textrm{ on } X_n \times \mathbb{T}^d, 
\end{align*} 
which has a maximum at $(x_n, y_n) \in X_n \times \mathbb{T}^d$. Since $\Phi(x_n, y_n) \geq \Phi(x_n, x_n)$, we have 
\begin{align*}
u_n(x_n) - u(y_n) -\frac{\theta}{2}|x_n - y_n|^2 &\geq u_n(x_n) - u(x_n).
\end{align*}
By Lemma~\ref{lem:Lip}, $u$ is Lipschitz and so
\[\frac{\theta}{2} |x_n-y_n|^2 \leq u(x_n) - u(y_n) \leq C|x_n-y_n|.\]
 Hence we have the bound $|x_n-y_n| \leq \displaystyle\frac{C}{\theta}.$

Define $\psi(x) = \frac{\theta}{2}|x-y_n|^2$ and $\xi_n = u_n(x_n) - \frac{\theta}{2}|x_n-y_n|^2$. By the definition of $(x_n,y_n)$, $u_n - \psi$  has a maximum at $x_n$ relative to $X_n$. Since we have $u_n(x_n)=\psi(x_n)+\xi_n$ we see that $u_n\leq \psi + \xi_n$ on $X_n$ and so by Proposition~\ref{isMonotone} we have $\L_n u_n(x_n) \leq \L_n (\psi + \xi_n)(x_n)$. It follows that
\begin{align*}
\frac{nh^d}{d_{x_n}}v(x_{x_n})&=u_n(x_n) - \gamma \L_n u_n(x_n)\\
&\geq u_n(x_n) - \gamma \L_n (\psi+\xi_n)(x_n).
\end{align*}
By Theorem~\ref{FirstConsistencyResult}, Lemma~\ref{ConMeas1} we have
\begin{equation}\label{eq:onedir}
\rho(x_n)^{-1}v(x_n)\geq u_n(x_n) + \gamma_\eps(\rho^{-2}\div(\rho^2 b)u_n + \nabla \psi\cdot b)\big\vert_{x_n} - C\theta(\delta + \eps + \gamma_h),
\end{equation}
with probability at least $1 - Cn \exp\left( -cnh^{d+2}\delta^2 \right)$.

We now define $\phi(y) = -\frac{\theta}{2}|x_n-y|^2$.  Then $u-\phi$ attains its minimum over $\T^d$ at $y_n$, and so by the definition of viscosity supersolution we have
\begin{equation}\label{eq:otherdir}
\rho(y_n)^{-1}v(y_n)\leq u(y_n) + \gamma_\eps(\rho^{-2}\div(\rho^2 b)u + \nabla \phi\cdot b)\big\vert_{y_n}.
\end{equation}
Write $c(x) = 1 + \gamma_\eps \rho^{-1}\div(\rho^2 b)$ and note that $\nabla \psi(x_n)=\nabla \phi(y_n) = \theta(x_n-y_n)$ is bounded. Now subtract \eqref{eq:onedir} from \eqref{eq:otherdir} to find that
\[c(x_n)u_n(x_n) - c(y_n)u(y_n) \leq C|x_n-y_n| + C\theta(\delta+\eps+\gamma_h),\]
where $C>0$ depends on the Lipschitz constants of $\rho$, $v$, and $b$, and the positive lower bound for $\rho$. We now write 
\[c(x_n)u_n(x_n) - c(y_n)u(y_n) = c(x_n)(u_n(x_n) - u(y_n)) + (c(x_n) - c(y_n))u(y_n)\]
to obtain
\[c(x_n) (u_n(x_n) - u(y_n)) \leq C|x_n-y_n|+C\theta(\delta+\eps+\gamma_h),\]
where $C$ depends additionally on $\|u\|_\infty$. Since $c(x_n) \geq 1 - \eta\gamma_\eps>0$ and $|x_n-y_n|\leq C/\theta$ we have
\[\max_{x\in X_n} (u_n(x)-u(x)) \leq u_n(x_n) - u(y_n) \leq \frac{C}{\theta}+C\theta(\delta+\eps+\gamma_h).\]
Optimizing over $\theta>0$ yields
\[\max_{x\in X_n} (u_n(x)-u(x)) \leq C\sqrt{\delta+\eps+\gamma_h}.\]

The proof of the bound in the other direction on $u-u_n$ is analogous,  except we use the auxiliary function 
\[\Phi(x,y) = u(x) - u_n(y) - \frac{\theta}{2}|x-y|^2. \]
\end{proof}

\section{Numerical Experiments}\label{Numerical}

We now turn to some brief numerical experiments to illustrate our main results. We present numerical experiments confirming the convergence rates and parameter scalings from Theorems \ref{Rate} and \ref{1stRate} in Section  \ref{sec:rate}, and in Section \ref{sec:median} we give an application of PageRank to data depth and high dimensional medians. The code for the numerical experiments was implemented in Python $3.7.9$ and is available on GitHub: \url{https://github.com/jwcalder/ContinuumPageRank}.

\subsection{Convergence rates and parameter scalings}
\label{sec:rate}

To test the linear convergence rates in our main result Theorem \ref{Rate}, we work with an explicit solution of the continuum PDE \eqref{PRPDE}. We take the $2$-dimensional torus $\T^2$ and set
\[u(x) = 2 - (\cos(2\pi x_1) + \cos(2\pi x_2)),\]
and
\[v(x) = 2 - \left( 1 + \tfrac12 \gamma_h\pi^2 \right)(\cos(2\pi x_1) + \cos(2\pi x_2)),\]
where $\gamma_h$ is given in \eqref{eq:Gammas}. This pair of functions satisfies the PDE \eqref{PRPDE} with  $b=0$, $\sigma_\Phi = \tfrac14$ and $\rho=1$.  The corresponding discrete PageRank problem is to take the points $X_n$ to be an \emph{i.i.d.}~sample of size $n$ uniformly distributed on the torus $\T^2$, and construct a geometric graph over the data with edge weights given by \eqref{eq:weights} with $b(x)=0$, $\phi(t) = \tfrac{1}{\pi}$ for $0 \leq t \leq 1$, and $\Phi(t)=0$ otherwise. 

\begin{figure}[!t]
\centering
\subfloat[Synthethic data]{\includegraphics[clip=true,trim=10 15 15 15, width=0.49\textwidth]{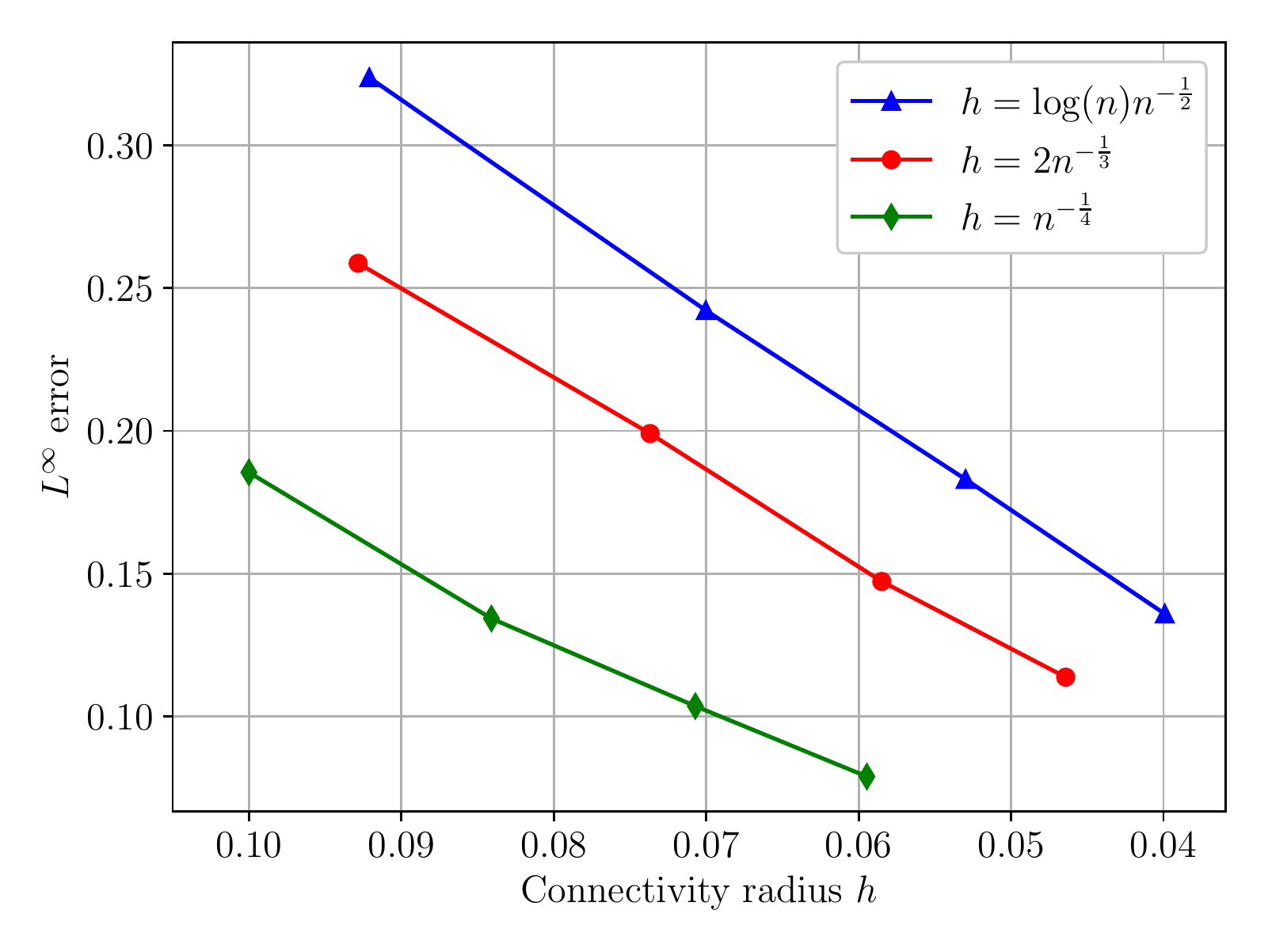}\label{fig:rate}}\hfill
\subfloat[MNIST data]{\includegraphics[clip=true,trim=10 15 15 15, width=0.49\textwidth]{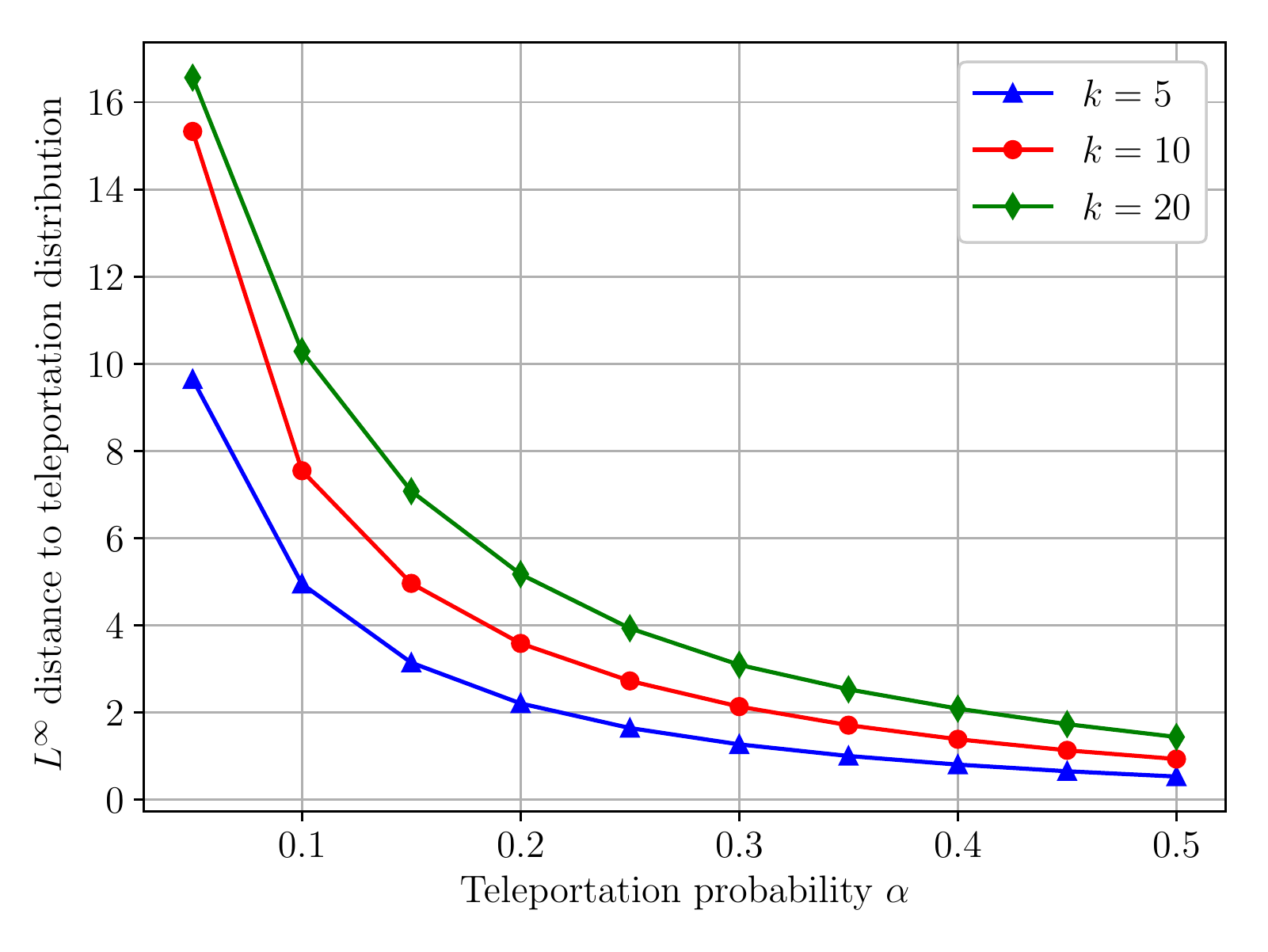}\label{fig:alpha}}
\caption{\textbf{Left:} For an explicit solution to the continuum limit PDE \eqref{PRPDE}, we plot the connectivity radius $h$ vs. the $L^\infty$ norm of the solution error as the number of nodes increases for the three different dependencies $h = h(n)$ noted in the legend. The experiment demonstrates the linear convergence rate in our main result Theorem \ref{Rate}.\\ \textbf{Right:} A computational experiment on MNIST data illustrating, for different numbers of neighbors $k$, how the $L^\infty$ norm to the uniform teleportation distribution depends on the transportation probability $\alpha$.}
\label{fig:exp}
\end{figure}

We ran experiments with  $n=10^4,2\times 10^4, 4\times 10^4,$ and $n=8\times 10^4$, and three different choices for the connectivity length scale $h$: $h=\log(n)n^{-\frac12}$, $h=2n^{-\frac13}$, and $h=n^{-\frac14}$. To keep $\gamma_h$ roughly constant, we set $\alpha = Ch^2$ in each case, with $C=30,20,$ and $10$, respectively. We ran 100 trials for each combination of $n$, $h$, and $\alpha$, and computed the average $L^\infty$ error over all trials. The experiments were conducted on a $24$-core machine with $3.8$ Gz processors and $256$ GB of RAM. For the largest number of vertices, $n = 8\times 10^4$, each trial took approximately six minutes on a single core and used $8$ GB of RAM. 

Theorem \ref{Rate} guarantees a linear $O(h)$ convergence rate for a much larger length scale $h \gg   \log(n)^{\frac16}n^{-\frac16}$ (due to \eqref{eq:almostsure_rate}), but the graphs are too dense at this length scale to perform experiments with large $n$.  Nevertheless, we observe linear $O(h)$ convergence rates at all three smaller length scales, as is shown in the plot in Figure \ref{fig:rate}. It would be an interesting problem for future work to prove the linear rate from Theorem \ref{Rate} at these smaller length scales.

To test how the parameter scalings suggested by our main results transfer to real data, we ran an experiment with the MNIST dataset \cite{lecun1998gradient} to compare how the distance between the PageRank vector and the teleportation distribution depends on $\alpha$. The MNIST dataset contains 70,000 $28\times 28$ pixel grayscale images of handwritten digits 0--9 (see Figure \ref{fig:PageRankMNIST} for an example of some MNIST images).  We used all  $70,000$ images to construct the graph, and flattened the images into vectors in $\R^{784}$. We used a $k$-nearest neighbor graph with edge weight between images $i, j$ is given by 
\begin{align*}
w_{i,j} = \exp\left(\frac{-4|x_i - x_j|^2}{d_k(x_i)^2}\right),
\end{align*}
where $d_k(x_i)$ is the Euclidean distance between image $x_i$ and its $k^{\rm th}$ nearest neighbor. A $k$-nearest neighbor graph is naturally directed (i.e., asymmetric), since the $k$-nearest neighbor relation is not symmetric. We use the uniform teleportation probability distribution $v$.

The continuum PDE \eqref{PRPDE} suggests that the difference between the PageRank vector $u$ and the teleportation distribution $v$ is proportional to $\gamma_n = (\alpha^{-1}-1)h^2$. In Figure \ref{fig:alpha} we show the $L^\infty$ distance between the teleportation distribution and PageRank vector as a function of $\alpha$ for $k$-nearest neighbor graphs with $k=10,20,30$. The relationship between the error and $\alpha$ is well-approximated by the predicted scaling of $\alpha^{-1}$ (a regression yields a power law $\alpha^{-p}$ with $p=1.21,1.27$ and $p=1.05$ for $k=10,20$ and $k=30$, respectively). We also see that  the error is increasing with $k$, as expected, since $h$ grows with $k$. In practice, it is important to choose $\alpha$ small enough so that the PageRank vector does not simply copy the teleportation distribution, and instead explores the geometry of the graph, while not choosing $\alpha$ so small that the PageRank iterations are slow to converge.

We also mention that we ran the same experiment on the Fashion MNIST dataset \cite{xiao2017fashion}, and obtained very similar results. The Fashion MNIST dataset is a drop-in replacement for MNIST, where the classes are types of clothing, and each data point is a $28\times 28$ pixel grayscale image of a clothing item from a fashion catalog (see Figure \ref{fig:FashionMNIST} for an example of some Fashion MNIST images).

\subsection{PageRank for data depth}
\label{sec:median}

In this section, we present an application of using PageRank for computing a notion of data depth and high dimensional medians. We ran experiments on the MNIST and Fashion MNIST datasets, using the same graph construction as described in Section \ref{sec:rate}, with $k=10$ Euclidean nearest neighbors.  We used the localized PageRank algorithm, focused on each class separately, with $\alpha=0.05$. That is, we ran 1 experiment for each class with the teleportation distribution taken as the characteristic function of that class. This generates a ranking of all images relative to each class, which can be interpreted as a notion of in-class data depth, with the highest ranked images corresponding to a generalization of median images.

In Figures \ref{fig:PageRankMNIST} and \ref{fig:FashionMNIST} we show the top 11 ranked images for each class (on the right), alongside a random selection of images (on the left).  We note that the highest ranked images are the most ``typical'' for each class, which can be interpreted as median images. There is very little variation amongst the highest ranked digits, as compared to random digits. The same is observed for Fashion MNIST; the ranked images display less variation, have fewer patterns, and are more solid-colored. 

\begin{figure}[t]
\includegraphics[width=0.48\textwidth,trim=0 0 112 0, clip=true]{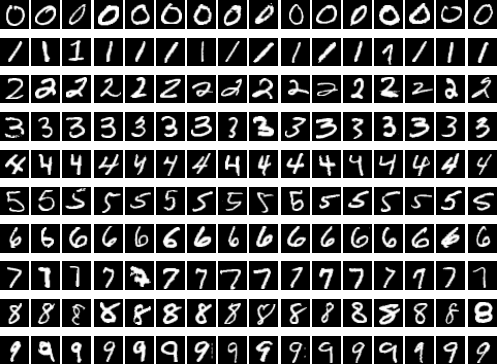}\hfill
\includegraphics[width=0.48\textwidth,trim=0 0 112 0, clip=true]{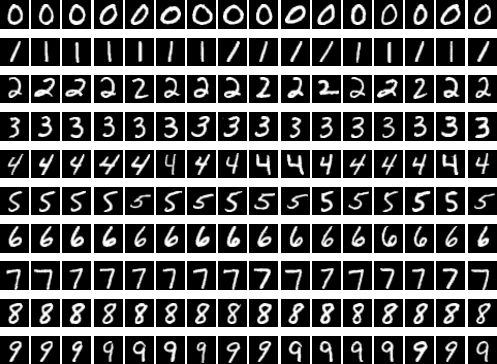}
\caption{\textbf{Left:} A selection of $11$ random samples per each digit from the MNIST dataset. \\ \textbf{Right:} The highest ranked images per digit, using localized PageRank with $\alpha = 0.05$.}
\label{fig:PageRankMNIST}
\centering
\end{figure}

\begin{figure}[t]
\includegraphics[width=0.48\textwidth,trim=0 0 112 0, clip=true]{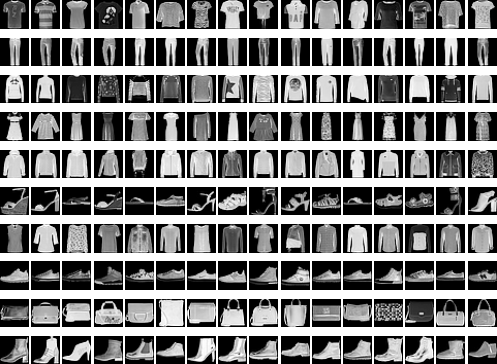}\hfill
\includegraphics[width=0.48\textwidth,trim=0 0 112 0, clip=true]{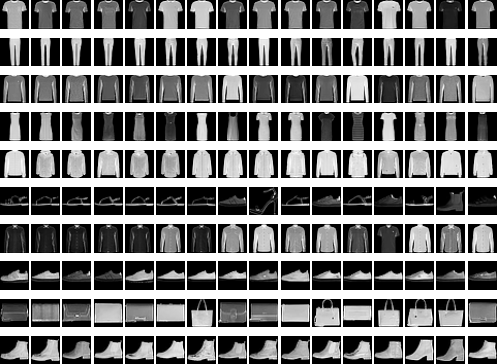}
\caption{\textbf{Left:} A selection of $11$ random images per class from the Fashion MNIST dataset, separated by type (i.e., handbag, shirt, pants, etc.). \\ \textbf{Right:} The highest ranked images per type of item, using localized PageRank with $\alpha = 0.05$. Notice that the highest ranked images are more pattern-free when compared to the random images.} 
\label{fig:FashionMNIST}
\centering
\end{figure}

\section{Conclusion}

In this paper, we established a new framework for rigorously studying continuum limits for discrete learning problems on directed graphs. We introduced the \emph{random directed geometric graph}, and showed how to prove a continuum limit with this graph model for the PageRank algorithm. Depending on parameter scalings, the continuum limit is either a second order reaction-diffusion-advection equation, or a first order reaction-advection equation, whose solution can be interpreted in the viscosity sense.  We also presented some numerical experiments confirming our theoretical results, and exploring applications of the PageRank algorithm to multi-variate data depth.

\begin{acknowledgements}
A.~Yuan and J.~Calder were partially supported by NSF-DMS grants 1713691 and 1944925, and a University of Minnesota Grant in Aid award. J.~Calder was also partially supported by the Alfred P.~Sloan Foundation. B.~Osting was partially supported by NSF DMS 16-19755 and DMS 17-52202.
\end{acknowledgements}

\appendix

\section{A more general operator}\label{GeneralizedB}

In this section, we consider the more general case where the matrix $B$ in the definition of the weights for a random directed geometric graph (see Section~\ref{sec:RDGG} for definitions) is not the identity matrix. This case turns out to be difficult to interpret, and so we omit the details of extending our main results to this setting, but include the discussion below for completeness.

Assume that $B(x) \in \mathbb{R}^{d\times d}$ has bounded, Lipschitz continuous entries and has full rank for all $x\in \T^d$. Then, following the arguments of the previous subsection, the continuum limit operator is 
\begin{align*}
L^{B}u(x) &:=-\gamma_h \left(\sigma_\Phi \textrm{Tr}\left(B(x)^{-1}B(x)^{-1} \nabla^2u(x)\right)\hspace{-1mm}+ \hspace{-1mm}\sum_{i,j} u_{x_i x_j}(x) \hspace{-1mm}\int \Phi(|z|) f(x,z)_i f(x,z)_j dz \right) \\
&\hspace{3in}+ \sum_i \mathfrak{b}^i(x) u_{x_i}(x) + \mathfrak{c}(x) u(x), 
\end{align*}
where the first order term is
\begin{align*}
&\sum_i \mathfrak{b}^i(x) u_{x_i}(x) \\
&= \gamma_\varepsilon \nabla u(x)\cdot \nabla b(x) - \gamma_h \left(\sigma_\Phi\left(B(x)^{-1}\right)^2 \nabla\log\rho(x) - F\right)\cdot \nabla u(x) \\
&- \gamma_h \int \Phi(|z|) \left(\nabla u(x) \cdot f(x,z)\right) \left(\nabla \log \rho(x) \cdot f(x,z)\right) dz \\ 
&+ 2\gamma_h \sum_i u_{x_i}(x) \int \Phi(|z|) \left(\left(B(x)^{-1}z\right)_i +f(x,z)_i\right)\textrm{Tr}\left(B(x)^{-1}DB(x)B(x)^{-1}z\right) dz,
\end{align*}
and the zeroth order term is
\begin{align*}
\mathfrak{c}(x)u(x) &= \Big(1+ \gamma_\varepsilon \left(\rho(x)^{-2}\textrm{div}(\rho(x)^2 b(x)) - \textrm{Tr}\left(B(x)^{-1}DB(x)b(x)\right) \right) \\
&\hspace{2in}+ \gamma_h \left(\nabla\log\rho(x) \cdot F - H_\Phi\right)\Big) u(x). 
\end{align*} 

We use $f(x,z)$ to represent the vector \[\left(B(x)^{-1}DB(x)B(x)^{-1}z\right)x\] and  $F$ to represent the vector \[B(x)^{-1} DB(x) B(x)^{-1}zB(x)^{-1}DB(x)xB(x)^{-1}z - \left(B(x)^{-1}DB(x)B(x)^{-1}z\right)^2x.\]

To clarify the notation, we use $DB(x)$ to represent the tensor where the $ij$th term is $\nabla B^{ij}(x)$. For instance, the $ij$th component of the term $DB(x) B(x)^{-1}z$ is the dot product of the gradient of the $ij$th component of $B$ with the vector $B(x)^{-1}z$, \ie,
\begin{align*}
\nabla B^{ij}(x) \cdot B(x)^{-1}z.
\end{align*}

In the zeroth order term, we use $H_\Phi$ to represent the integral 
\begin{align*}
&\int \Phi(|z|)\Bigg(2\textrm{Tr}^2\left(B(x)^{-1}DB(x)B(x)^{-1}z\right)-\textrm{Tr}\left(\left(B(x)^{-1}DB(x)B(x)^{-1}z\right)^2\right) \nonumber \\
&\hspace{.5cm} -\frac{1}{2}\textrm{Tr}\left(B(x)^{-1}(B(x)^{-1}zD^2B(x)B(x)^{-1}z)\right) +B(x)^{-1}DB(x)B(x)^{-1}zB(x)^{-1}z \nonumber\\
&\hspace{.5cm}+ B(x)^{-1}D^2B(x)zB(x)^{-1}z -B(x)^{-1}DB(x) B(x)^{-1}DB(x) B(x)^{-1}zB(x)^{-1}z \\
&\hspace{.5cm} -2\nabla\log\rho(x)\cdot f(x,z) \textrm{Tr}\left(B(x)^{-1}DB(x)B(x)^{-1}z\right) \Bigg)dz.
\end{align*}

Due to the presence of $DB(x)$, the terms in $F$, $f$, and $H_\Phi$ will vanish when $B$ is a constant matrix, such as the identity.

\section{Definition of viscosity solution}
\label{sec:viscosity}

Viscosity solutions are a notion of weak solution for PDEs based on the maximum principle. Viscosity solutions enjoy very strong stability and uniqueness properties, and the theory is especially useful for passing from discrete to continuum limits (see, \eg\cite{calder2018lecture,calder2018limit,calder2014hamilton}). We review here the basic definitions of viscosity solutions of the first order equation
\begin{equation}\label{eq:HJB}
H(x,u,\nabla u) = 0 \ \ \ \text{in }\Omega,
\end{equation}
where $\Omega\subset \R^d$ is open. For viscosity solution on the torus $\T^d$, we take $\Omega = \R^n$ and treat functions on  $\T^d$ as $\Z^d$-periodic functions on $\R^d$ for defining viscosity solutions.
\begin{definition}\label{def:visc_bc}
We say $u \in C(\bar{\Omega})$ is a \emph{viscosity subsolution} of \eqref{eq:HJB} if for all $x \in \Omega$ and every $\phi \in C^\infty(\R^n)$ such that $u-\phi$ has a local maximum at $x$ we have
\[H(x,u(x),\nabla \varphi(x))\leq 0.\]

Likewise, we say that $u \in C(\bar{U})$ is a \emph{viscosity supersolution} of \eqref{eq:HJB} if for all $x \in \Omega$ and every $\phi \in C^\infty(\R^n)$ such that $u-\phi$ has a local minimum at $x$ we have
\[H(x,u(x),\nabla \varphi(x))\geq 0.\]

Finally, we say that $u$ is a \emph{viscosity solution} of \eqref{eq:HJB} if $u$ is both a viscosity sub- and supersolution. 
\end{definition}

\end{document}